\newcommand{\ddd}{
\text{\begin{picture}(12,8)
\put(-2,-4){$\cdot$}
\put(3,0){$\cdot$}
\put(8,4){$\cdot$}
\end{picture}}}
\renewcommand{\le}{\leqslant}
\renewcommand{\ge}{\geqslant}
\newtheorem{theorem}{Theorem}
\newtheorem{example}[theorem]{Example}
\newtheorem{lemma}[theorem]{Lemma}
\begin{document}

\title{Canonical forms for complex matrix congruence and *congruence\footnotetext{This is the authors' version of a work that was published in Linear Algebra Appl. 416 (2006) 1010--1032.}}
\author{Roger A. Horn\\Department of Mathematics, University of Utah\\Salt Lake City, Utah 84103, rhorn@math.utah.edu
\and Vladimir V. Sergeichuk\thanks{The research was started while this author was
visiting the University of Utah supported by NSF grant DMS-0070503.}\\Institute of Mathematics, Tereshchenkivska 3\\Kiev, Ukraine, sergeich@imath.kiev.ua}
\date{}
\maketitle

\begin{abstract}
Canonical forms for
congruence and *congruence of
square complex matrices were
given by Horn and Sergeichuk
in [Linear Algebra Appl. 389
(2004) 347--353], based on
Sergeichuk's paper [Math.
USSR, Izvestiya 31 (3) (1988)
481--501], which employed the
theory of representations of
quivers with involution. We
use standard methods of
matrix analysis to prove
directly that these forms are
canonical. Our proof provides
explicit algorithms to
compute all the blocks and
parameters in the canonical
forms. We use these forms to
derive canonical pairs for
simultaneous congruence of
pairs of complex symmetric
and skew-symmetric matrices
as well as canonical forms
for simultaneous *congruence
of pairs of complex Hermitian
matrices.

\textit{AMS classification:} 15A21; 15A63

\textit{Keywords:} Canonical forms, Congruence, *Congruence, Bilinear forms;
Sesquilinear forms, Canonical pairs.

\end{abstract}

\renewcommand{\maltese}{\text{\setlength{\fboxsep}{1pt}\fbox{$*$}}}
\renewcommand{\le}{\leqslant} \renewcommand{\ge}{\geqslant}

\section{Introduction}

\label{s1}

Canonical matrices for congruence and *congruence over any field
$\mathbb{F}$ of characteristic not 2 were established in
\cite[Theorem 3]{ser1} up to classification of Hermitian forms
over finite extensions of $\mathbb{F}$. Canonical forms for
complex matrix congruence and *congruence are special cases of the
canonical matrices in \cite{ser1} since a classification of
Hermitian forms over the complex field is known. Simpler versions
of these canonical forms were given in \cite{hor-ser}, which
relied on \cite{ser1}, and hence on the theory of representations
of quivers with involution on which the latter is based.

In this paper, all matrices considered are complex. We use standard tools of
matrix analysis to give a direct proof that the complex matrices given in
\cite{hor-ser} are canonical for congruence and *congruence.

Let $A$ and $B$ be square complex matrices of the same size. We
say that $A$ and $B$ are \emph{congruent} if there is a
nonsingular $S$ such that $S^{T}AS=B$; they are *\negthinspace
\emph{congruent} if there is a nonsingular $S$ such that
$S^{\ast}AS=B$. We let $S^{\ast}:=[\bar{s}_{ji}]=\bar{S}^{T}$
denote the complex conjugate transpose of $S=[s_{ij}]$ and write
$S^{-T}:=(S^{-1})^{T}$ and $S^{-\ast}:=(S^{-1})^{\ast}$.

Define the $n$-by-$n$ matrices
\begin{equation}
\Gamma_{n}=%
\begin{bmatrix}
0 &  &  &  &  & (-1)^{n+1}\\
&  &  &  & \ddd
& (-1)^{n}\\
&  &  & -1 &
\ddd & \\
&  & 1 & 1 &  & \\
& -1 & -1 &  &  & \\
1 & 1 &  &  &  & 0
\end{bmatrix}
\quad\text{(}\Gamma_{1}=[1]\text{),} \label{Gamman}%
\end{equation}%
\begin{equation}
\Delta_{n}=%
\begin{bmatrix}
0 &  &  & 1\\
&  &
\ddd & i\\
& 1 &
\ddd & \\
1 & i &  & 0
\end{bmatrix}
\quad\text{(}\Delta_{1}=[1]\text{),} \label{Deltan}%
\end{equation}
and the $n$-by-$n$ Jordan block with eigenvalue $\lambda$%
\[
J_{n}(\lambda)=%
\begin{bmatrix}
\lambda & 1 &  & 0\\
& \lambda & \ddots & \\
&  & \ddots & 1\\
0 &  &  & \lambda
\end{bmatrix}
\quad\text{(}J_{1}(\lambda)=[\lambda]\text{).}%
\]
The most important properties of these matrices for our purposes are that
$\Gamma_{n}$ is real and $\Gamma_{n}^{-T}\Gamma_{n}=\Gamma_{n}^{-\ast}%
\Gamma_{n}$ is similar to $J_{n}((-1)^{n+1})$; $\Delta_{n}$ is symmetric and
$\Delta_{n}^{-\ast}\Delta_{n}=\bar{\Delta}_{n}^{-1}\Delta_{n}$ is similar to
$J_{n}(1)$.

We also define the $2n$-by-$2n$ matrix%
\begin{equation}
H_{2n}(\mu)=%
\begin{bmatrix}
0 & I_{n}\\
J_{n}(\mu) & 0
\end{bmatrix}
\quad\bigr(H_{2}(\mu)=\left[
\begin{array}
[c]{cc}%
0 & 1\\
\mu & 0
\end{array}
\right]  \bigl)\text{,} \label{Hn}%
\end{equation}
the \emph{skew sum} of $J_{n}(\mu)$ and $I_{n}$. If $\mu\neq0$, then
$H_{2n}(\mu)^{-T}H_{2n}(\mu)$ is similar to $J_{n}(\mu)\oplus J_{n}(\mu^{-1})$
and $H_{2n}(\mu)^{-\ast}H_{2n}(\mu)$ is similar to $J_{n}(\mu)\oplus
J_{n}(\bar{\mu}^{-1})$.

Sylvester's Inertia Theorem describes the *congruence canonical
form of a complex Hermitian matrix. Our main goal is to give a
direct proof of the following theorem, which generalizes
Sylvester's theorem to all square complex matrices.

\begin{theorem}
[{\cite[Section 2]{hor-ser}}]
\label{t2}
\textrm{(a)} Each square complex
matrix is congruent to a direct sum, uniquely determined up to permutation of
summands, of canonical matrices of the three types%
\begin{equation}\label{table1}
\renewcommand{\arraystretch}{1.2}
\begin{tabular}
[c]{|c|c|}\hline $\text{Type
0}$ & $J_{n}(0)$\\\hline
$\text{Type I}$ &
$\Gamma_{n}$\\\hline
$\text{Type II}$ &
$H_{2n}(\mu),\ \
0\neq\mu\neq(-1)^{n+1},$
  \\
&$\mu$ is determined up to
replacement by $\mu^{-1}$
\\\hline
\end{tabular}
\end{equation}

\textrm{(b)} Each square complex matrix is \textrm{*\negthinspace}congruent to
a direct sum, uniquely determined up to permutation of summands, of canonical
matrices of the three types
\begin{equation}\label{table2}
\renewcommand{\arraystretch}{1.2}
\begin{tabular}
[c]{|c|c|}\hline $\text{Type
0}$ & $J_{n}(0)$\\\hline
$\text{Type I}$ &
$\lambda\Delta_{n},\ \
|\lambda|=1$\\\hline
$\text{Type II}$ &
$H_{2n}(\mu),\ \
|\mu|>1$\\\hline
\end{tabular}
\end{equation}
Instead of $\Delta_{n}$, one
may use $\Gamma_{n}$ or any
other nonsingular $n\times n$
matrix $F_{n}$ for which
there exists a real
$\theta_{n}$ such that
$F_{n}^{-\ast}F_{n}$ is
similar to
$J_{n}(e^{i\theta_{n}})$.
\end{theorem}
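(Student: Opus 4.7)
My strategy is built around the \emph{cosquare}: for nonsingular $A$, set $\mathcal{C}(A)=A^{-T}A$ in part~(a) and $\mathcal{C}_{*}(A)=A^{-\ast}A$ in part~(b). The identity $(S^TAS)^{-T}(S^TAS)=S^{-1}\mathcal{C}(A)S$ shows that the similarity class of the cosquare is a congruence invariant, and analogously for $\mathcal{C}_{*}$ under *congruence. Moreover, $A=A^T\mathcal{C}(A)$ gives $A\mathcal{C}(A)A^{-1}=\mathcal{C}(A)^{-T}$, so the Jordan structure of $\mathcal{C}(A)$ is symmetric under $\mu\mapsto\mu^{-1}$; the analogous relation for $\mathcal{C}_{*}$ forces symmetry under $\mu\mapsto\bar\mu^{-1}$, whose fixed points form the unit circle. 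The excerpt already records the cosquare computations for the candidate canonical blocks, so the list in (\ref{table1}) and (\ref{table2}) is exactly what the Jordan patterns of the cosquare can produce.

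First I would reduce to the nonsingular case by a \emph{regularization} step. Working in a basis adapted to the ascending kernel filtrations of $A$ and $A^T$, I would peel off $J_n(0)$ summands one at a time by locating, inside the singular part, a cyclic subspace on which $A$ restricts to $J_n(0)$ and which admits an $A$-orthogonal complement. Iterating yields a decomposition $A\cong A_{\mathrm{reg}}\oplus\bigoplus_j J_{n_j}(0)$ with $A_{\mathrm{reg}}$ nonsingular, the sizes $n_j$ being uniquely determined by the sequence $\dim\ker A^k$.

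With $A$ nonsingular, I would decompose $\mathbb{C}^n$ into the generalized eigenspaces of the cosquare. Eigenspaces for unpaired eigenvalues are totally isotropic with respect to the form $(x,y)\mapsto x^T A y$ (resp.\ $x^\ast A y$) and are dually paired by it, so on each paired pair $\{\mu,\mu^{-1}\}$ with $\mu\neq\pm 1$ in part~(a), resp.\ $\{\mu,\bar\mu^{-1}\}$ with $|\mu|\neq 1$ in part~(b), a choice of dual bases reduces $A$ to a direct sum of $H_{2n}(\mu)$ blocks, one per Jordan pair of the cosquare. The constraints ``$\mu\neq(-1)^{n+1}$'' in Type~II of~(\ref{table1}) and ``$|\mu|>1$'' in Type~II of~(\ref{table2}) merely select a representative of the orbit $\mu\leftrightarrow\mu^{-1}$ (resp.\ $\mu\leftrightarrow\bar\mu^{-1}$). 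For a self-paired eigenvalue the form is nondegenerate on the corresponding generalized eigenspace, and each single Jordan block $J_n(\mu)$ of the cosquare must be reduced further; I would do so by an explicit inductive change of basis along the Jordan chain, obtaining $\Gamma_n$ in part~(a) (which forces $\mu=(-1)^{n+1}$) and $\lambda\Delta_n$ in part~(b) for a uniquely determined unit-modulus $\lambda$ with $\lambda/\bar\lambda=\mu$.

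Uniqueness in part~(a) follows immediately from the congruence-invariance of the Jordan data of $\mathcal{C}(A)$. In part~(b) the Jordan data determine only $\mu=\lambda/\bar\lambda$, so the phase $\lambda$ is an additional invariant which I would extract by evaluating a suitably chosen scalar of $A$ on the top of the Jordan chain that transforms trivially under *congruence. The main obstacle I anticipate is precisely this self-paired case: invariance of the cosquare does not by itself pin down the congruence class on a single block $J_n(\pm 1)$ or $J_n(e^{i\theta})$, and producing the explicit $\Gamma_n$ or $\lambda\Delta_n$ normal form together with the proof that $\lambda$ is a genuine invariant will require the bulk of the technical work. The closing sentence of the theorem---that any nonsingular $F_n$ with $F_n^{-\ast}F_n\sim J_n(e^{i\theta_n})$ may replace $\Delta_n$---is a direct consequence of this analysis, since the same inductive reduction carries through with $F_n$ in place of $\Delta_n$.
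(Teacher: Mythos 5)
Your outline correctly identifies the cosquare as the central invariant, the $\mu\leftrightarrow\mu^{-1}$ (resp.\ $\mu\leftrightarrow\bar\mu^{-1}$) pairing in its Jordan structure, and the fact that the whole difficulty sits in the self-paired blocks; but the two steps you defer are exactly the substance of the proof, and the devices you gesture at for them would not suffice. For part (a), the paper does not reduce block by block along Jordan chains at all: it proves outright that \emph{nonsingular matrices with similar cosquares are congruent} (Lemma \ref{l_nonsi}) by setting $C=S^TBS$, $M=CA^{-1}$, observing $C=MA=AM^T$, and taking a primary-matrix-function square root $p(M)^2=M$ so that $C=p(M)\,A\,p(M)^T$. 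This single lemma makes the cosquare a complete congruence invariant for nonsingular matrices, after which existence and uniqueness in (a) are immediate from the known Jordan structure \eqref{non3}; your proposed ``explicit inductive change of basis along the Jordan chain'' to reach $\Gamma_n$ is never needed and, if attempted directly, is precisely the hard computation the square-root trick avoids.

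For part (b) the gap is more serious. The *cosquare is \emph{not} a complete invariant ($[i]$ and $[-i]$ share the *cosquare $[-1]$), so the invariant you must extract for a self-paired eigenvalue is not ``a suitably chosen scalar \ldots that transforms trivially under *congruence'': when several blocks $\lambda\Delta_n$ of equal size and equal $\lambda$ occur, the invariant is the \emph{number} of $+$ signs among them, i.e.\ an inertia index, and it is preserved only modulo the action of an arbitrary nonsingular matrix. The paper handles existence by splitting $M=B_SA^{-1}$ according to its negative versus non-negative spectrum and taking a \emph{real} polynomial square root on the non-negative part (Lemma \ref{Lemma*congrence}), and handles uniqueness by showing that any *congruence between two sign-decorated sums $\bigoplus\varepsilon_r\Delta_{n_r}$ and $\bigoplus\delta_r\Delta_{n_r}$ is implemented by an $N$-upper Toeplitz matrix, passing to the ``underlined'' $k\times k$ compression, and invoking Sylvester's Inertia Theorem on $\operatorname{diag}(\varepsilon_1,\dots)=\underline{S}^{\ast}\operatorname{diag}(\delta_1,\dots)\underline{S}$. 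Neither of these mechanisms appears in your plan, and without something equivalent the claim that $\lambda$ (with its sign and multiplicity) is a well-defined invariant remains unproved.
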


For *congruence canonical matrices of Type I, it is sometimes
convenient to identify the unit-modulus canonical parameter
$\lambda$ with the \emph{ray} $\{t\lambda:t>0\}$ or with the
\emph{angle }$\theta$ such that $\lambda =e^{i\theta}$ and
$0\leq\theta<2\pi$. If $\lambda$ occurs as a coefficient of exactly
$k$ blocks $\Delta_{n}$ in a
*congruence canonical form,
we say that it is a
\emph{canonical angle (or
ray) of order }$n$ \emph{with
multiplicity }$k$.

Our proof of Theorem \ref{t2} provides explicit algorithms to compute the
sizes and multiplicities of the canonical blocks $J_{n}(0)$, $\Gamma_{n}$,
$\lambda\Delta_{n}$, and $H_{2n}(\mu)$ and their canonical parameters
$\lambda$ and $\mu$.

It suffices to prove Theorem \ref{t2} only for nonsingular matrices because of
the following lemma, which is a specialization to the complex field of a
\textit{regularizing decomposition} for square matrices over any field or skew
field with an involution \cite{hor-ser1}.

\begin{lemma}
\label{t3} Each square
complex matrix $A$ is
congruent (respectively,
{\rm*\!}congruent) to a
direct sum of the form
\begin{equation}
B\oplus J_{r_{1}}(0)\oplus\cdots\oplus J_{r_{p}}(0)\quad\text{with a
nonsingular $B$.} \label{1.1}%
\end{equation}
This direct sum is uniquely
determined up to permutation
of its singular direct
summands and replacement of
$B$ by any matrix that is
congruent (respectively,
{\rm*\!}congruent) to it.
\end{lemma}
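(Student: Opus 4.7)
The plan is to establish existence by induction on the size of $A$ (peeling off singular Jordan blocks one at a time) and to obtain uniqueness from a Krull--Schmidt type argument using invariants preserved under congruence.

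For existence, if $A$ is nonsingular, take $B = A$ with no Jordan blocks. Otherwise, I would show that $A$ is congruent (resp.\ $*$congruent) to $A' \oplus J_k(0)$ for some $k \ge 1$, and then apply the induction hypothesis to $A'$. To produce such a decomposition, pick a nonzero $v_1 \in \ker A$ and build a maximal Jordan chain $v_1, v_2, \ldots, v_k$ with $A v_{i+1} = v_i$. Then find complementary vectors $w_1, \ldots, w_k$ so that on the $2k$-dimensional subspace $U := \mathrm{span}\{v_i, w_j\}$, the restriction of the bilinear form $(x,y) \mapsto x^T A y$ (resp.\ the sesquilinear form $(x,y) \mapsto x^* A y$) has matrix $J_k(0)$ in a suitable basis, and so that $U$ is both left- and right-orthogonal to a chosen complement. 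The $w_j$ are constructed recursively by solving a triangular system whose consistency follows from the Jordan chain relations.

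For uniqueness, I would pass to the matrix pencil $A + \lambda A^T$ (resp.\ an appropriate sesquilinear analogue in the $*$congruence case). A congruence on $A$ induces strict equivalence of the pencil, so the Kronecker canonical form of the pencil is a congruence invariant of $A$. The singular (minimal-index) blocks of this form arise exclusively from the $J_{r_i}(0)$ summands and encode the multiset $\{r_1, \ldots, r_p\}$, while the regular part encodes $B$ up to congruence (resp.\ $*$congruence). This supplies the claimed uniqueness.

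The main obstacle is the existence of suitable complementary vectors $w_j$ in the inductive step. Congruence differs from similarity in that the complementary subspace must be orthogonal with respect to the underlying form, not merely an $A$-invariant complement. One must verify that at each recursive stage the obstruction to finding $w_j$ lies in the span of previously constructed vectors and can be absorbed by adjustment, so that the recursion terminates in a genuine direct-sum decomposition under congruence (resp.\ $*$congruence).
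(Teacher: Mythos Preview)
The paper does not actually prove this lemma: it states the result and cites \cite{hor-ser1} for the regularizing algorithm, so there is no in-paper proof to compare against. That said, your proposal has a genuine gap in the existence step.

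Your inductive peeling is built on a \emph{similarity} construction, not a congruence one. A chain $v_1\in\ker A$, $Av_{i+1}=v_i$ treats $A$ as an endomorphism $V\to V$; for congruence the relevant object is the bilinear form $(x,y)\mapsto x^TAy$, i.e.\ a map $V\to V^\ast$, and there is no intrinsic notion of ``$Av_{i+1}=v_i$'' without an identification of $V$ with $V^\ast$ that congruence does not provide. Concretely, splitting off a summand $J_k(0)$ under congruence requires a \emph{$k$-dimensional} subspace $W$ with a basis in which the Gram matrix of the form is $J_k(0)$, together with a complement that is both left- and right-orthogonal to $W$. Your description instead produces a $2k$-dimensional span $U=\mathrm{span}\{v_i,w_j\}$ and asserts that the restricted form has matrix $J_k(0)$; but the Gram matrix on a $2k$-dimensional space is $2k\times 2k$, so this cannot be $J_k(0)$. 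The sketch never explains how the two families $\{v_i\}$ and $\{w_j\}$ collapse to a single $k$-dimensional block, nor how the required two-sided orthogonality to a complement is achieved. The algorithm in \cite{hor-ser1} proceeds quite differently, alternating between the left and right null spaces of $A$ (equivalently, between $A$ and $A^T$, resp.\ $A^\ast$) rather than building a one-sided Jordan chain.

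Your uniqueness idea via the Kronecker form of the pencil $A+\lambda A^T$ is reasonable for congruence, though one must check carefully how each $J_r(0)$ contributes to the singular Kronecker structure and that the regular part of the pencil determines $B$ up to congruence rather than merely up to strict equivalence. For $\ast$congruence the phrase ``an appropriate sesquilinear analogue'' hides a real difficulty: there is no off-the-shelf Kronecker theory for pencils $A+\lambda A^\ast$ under the transformation $(S^\ast,S)$, so this route would require substantial additional work.
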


The nonsingular direct summand $B$ in (\ref{1.1}) is called the \emph{regular
part} of $A$; the singular blocks in (\ref{1.1}) are its Type 0 blocks. There
is a simple algorithm to determine all of the direct summands in (\ref{1.1}).
If desired, this algorithm can be carried out using only unitary
transformations \cite{hor-ser1}.

In our development, it is convenient to use some basic properties of
\textit{primary matrix functions}. For a given square complex matrix $A$ and a
given complex valued function $f$ that is analytic on a suitable open set
containing the spectrum of $A$, the primary matrix function $f(A)$ may be
defined using a power series, an explicit formula involving the Jordan
canonical form of $A$, or a contour integral. For our purposes, its most
important property is that for each $A$, $f(A)$ is a polynomial in $A$ (the
polynomial may depend on $A$, however), so $f(A)$ commutes with any matrix
that commutes with $A$. For a systematic exposition of the theory of primary
matrix functions, see \cite[Chapter 6]{horn}.

Preceded by Weierstrass, Kronecker developed a comprehensive theory of
equivalence of matrix pencils in the late nineteenth century, but a similarly
complete theory of matrix congruence has been achieved only recently. Gabriel
\cite{gab} reduced the problem of equivalence of bilinear forms to the problem
of equivalence of nonsingular bilinear forms. Riehm \cite{rie} reduced the
problem of equivalence of nonsingular bilinear forms to the problem of
equivalence of Hermitian forms. His reduction was improved and extended to
sesquilinear forms in \cite{RSF}.

Using Riehm's reduction, Corbas and Williams \cite{cor} studied
canonical forms for matrix congruence over an algebraically closed
field with characteristic not 2. However, their proposed
nonsingular canonical matrices are cumbersome and not canonical,
e.g., their matrices
\[
A=\left[
\begin{array}
[c]{cc}%
0 & 1\\
1/2 & 0
\end{array}
\right] \quad\text{and}\quad
B=\left[
\begin{array}
[c]{cc}%
0 & 1\\
2 & 0
\end{array}
\right]
\]
are actually congruent: $BAB^{T}=B$. For the singular case, they refer to the
list of \textquotedblleft singular blocks of known type\textquotedblright\ in
\cite[p. 60]{wat}. These singular blocks are canonical but cumbersome, and we
are fortunate that they may be replaced by the set of singular Jordan blocks;
see \cite{ser1} or \cite{hor-ser1}.

Any square complex matrix $A$ can be represented uniquely as $A=\mathcal{S}%
+\mathcal{C}$, in which $\mathcal{S}$ is symmetric and $\mathcal{C}$
is skew-symmetric; it can also be represented uniquely as
$A=\mathcal{H}+i\mathcal{K}$, in which both $\mathcal{H}$ and
$\mathcal{K}$ are Hermitian. A simultaneous congruence of
$\mathcal{S}$ and $\mathcal{C}$ corresponds to a congruence of $A$
and a simultaneous
*congruence of $\mathcal{H}$ and $\mathcal{K}$ corresponds to a *congruence of $A$.
Thus, if one has canonical forms for $\mathcal{S}$ and $\mathcal{C}$
under simultaneous congruence (often called \emph{canonical pairs}),
then one can obtain a canonical form for $A$ under congruence as a
consequence. Similarly, a canonical form for $A$ under *congruence
can be obtained if one has canonical pairs for two Hermitian
matrices under simultaneous *congruence. Canonical pairs of both
types may be found in Thompson's landmark paper \cite{thom} as well
as in Lancaster and Rodman's recent reviews \cite{L-R} and
\cite{lan-rod}. Thompson's canonical pairs were used to obtain
canonical matrices for congruence over the real field by Lee and
Weinberg \cite{leewei}, who observed that \textquotedblleft the
complex case follows from Thompson's results just as
easily.\textquotedblright

However, deriving canonical forms for complex congruence and
*congruence from canonical pairs is like deriving the theory of
conformal mappings in the real plane from properties of conjugate
pairs of real harmonic functions. It can be done, but there are huge
technical and conceptual advantages to working with complex analytic
functions of a complex variable instead. Likewise, to derive
congruence or *congruence canonical forms for a complex matrix $A$
we advocate working directly with $A$ rather than with its
associated pairs $(\mathcal{S},\mathcal{C})$ or
$(\mathcal{H},\mathcal{K})$. Our approach leads to three simple
canonical block types for complex congruence rather than the six
cumbersome block types found by Lee and Weinberg \cite[p.
208]{leewei}.

Of course, canonical pairs for $(\mathcal{S},\mathcal{C})$ and
$(\mathcal{H},\mathcal{K})$ follow from congruence and
*congruence canonical forms for $A$. Define the following $n$-by-$n$ matrices:
\[
\label{pairh0}M_{n}:=%
\begin{bmatrix}
0 & 1 &  & 0\\
1 & 0 & \ddots & \\
& \ddots & \ddots & 1\\
0 &  & 1 & 0
\end{bmatrix}
,\quad N_{n}:=%
\begin{bmatrix}
0 & 1 &  & 0\\
-1 & 0 & \ddots & \\
& \ddots & \ddots & 1\\
0 &  & -1 & 0
\end{bmatrix}
,
\]%
\[
X_{n}:=%
\begin{bmatrix}
0 &  &  &  &  & (-1)^{n+1}\\
&  &  &  &
\ddd & 0 \\
&  &  & -1 &
\ddd & \\
&  & 1 & 0 &  & \\
& -1 & 0 &  &  & \\
1 & 0 &  &  &  & 0
\end{bmatrix}
,\quad Y_{n}:=%
\begin{bmatrix}
0 &  &  &  &  & 0\\
&  &  &  &
\ddd & (-1)^n\\
&  &  & 0 &
\ddd & \\
&  & 0 & 1 &  & \\
& 0 & -1 &  &  & \\
0 & 1 &  &  &  & 0
\end{bmatrix},
\]
and a 2-parameter version of the matrix (\ref{Deltan})%
\[
\Delta_{n}(a,b):=%
\begin{bmatrix}
0 &  &  & a\\
&  &
\ddd  & b\\
& a & \ddd & \\
a & b &  & 0
\end{bmatrix}
\text{,}\qquad a,b\in\mathbb{C}.
\]

Define the \textit{direct sum} of two matrix pairs
\[
(A_{1},A_{2})\oplus(B_{1},B_{2}):=(A_{1}\oplus B_{1},\,A_{2}\oplus
B_{2})
\]
and the \textit{skew sum} of two matrices
\[
\lbrack A\,\diagdown\,B]:=%
\begin{bmatrix}
0 & B\\
A & 0
\end{bmatrix}
.
\]

Matrix pairs $(A_{1},A_{2})$ and $(B_{1},B_{2})$ are said to be
\emph{simultaneously congruent} (respectively, \emph{simultaneously
*congruent}) if there is a nonsingular matrix $R$ such that $A_{1}=R^{T}%
B_{1}R$ and $A_{2}=R^{T}B_{2}R$ (respectively,
$A_{1}=R^{\ast}B_{1}R$ and $A_{2}=R^{\ast}B_{2}R$). This
transformation is a \emph{simultaneous congruence }(respectively,
\emph{a simultaneous {\rm*\!}congruence}) \emph{of the pair}
$(B_{1},B_{2})$ \emph{via} $R$.

The following theorem lists the canonical pairs that can occur and
their associations with the congruence and
*congruence canonical
matrices listed in \eqref{table1} and \eqref{table2} of Theorem
\ref{t2}. The parameters $\lambda$ and $\mu$ are as described in
Theorem \ref{t2}; the parameters $\nu$, $c$, $a$, and $b$ in the
canonical pairs are functions of $\lambda$ and $\mu$.

\begin{theorem}
\label{ths} (a) Each pair $(\mathcal{S},\mathcal{C})$ consisting
of a symmetric complex matrix $\mathcal{S}$ and a skew-symmetric
complex matrix $\mathcal{C}$ of the same size is simultaneously
congruent to a direct sum of pairs, determined uniquely up to
permutation of summands, of the following three types, each
associated with the indicated congruence canonical matrix type for
$A=\mathcal{S}+
\mathcal{C}$:\medskip%
\newline
\begin{equation}\label{table3}
\renewcommand{\arraystretch}{1.2}
\begin{tabular}
[c]{|c|c|}\hline Type 0: $J_{n}(0)$  & $(M_{n},\,N_{n})$
  \\\hline
Type I: $\Gamma_{n}$
   &
$\left( X_{n},Y_{n}\right)$ if $n$ is odd,
  \\
& $\left( Y_{n},X_{n}\right) $ if  $n$ is even
  \\\hline
Type II: $H_{2n}(\mu)$
& $(  [J_{n}%
(\mu+1)\diagdown J_{n}(\mu+1)^{T}],$\qquad\qquad
\\
& $\qquad\qquad [J_{n}(\mu-1)\diagdown-J_{n}(\mu -1)^{T}])$
\\
\multicolumn{2}{|c|}{$0\neq\mu\neq(-1)^{n+1}$ and $\mu$ is
determined up to replacement by $\mu^{-1}$}
\\ \hline
\end{tabular}
\end{equation}\\[1pt]
The Type II pair in \eqref{table3} can be replaced by two
alternative pairs
\begin{equation}\label{table4}
\renewcommand{\arraystretch}{1.2}
\begin{tabular}
[c]{|c|c|}
   \hline
Type II: $H_{2n}(\mu)$
   &
$\left(  [I_{n}\diagdown I_{n}],\: [J_{n}(\nu)\diagdown
-J_{n}(\nu)^{T}]\right) $
   \\
  $0\neq\mu\neq-1$
   &
$\nu \ne 0$ if $n$ is odd, $\nu\ne\pm 1,$
   \\
$\mu \ne 1$ if $n$ is odd
 & $\nu$ is determined up to
replacement by $-\nu$
   \\
   \hline Type II:
$H_{2n}(-1)$
    &
$\left(  [J_{n}(0)\diagdown J_{n}%
(0)^{T}],\: [I_{n}\diagdown-I_{n}]\right) $
   \\
$n$ is odd & $n$ is odd\\
\hline
\end{tabular}
\end{equation}
in which
\[
\nu=\frac{\mu-1}{\mu+1}.
\]

(b) Each pair $(\mathcal{H},\mathcal{K})$ of Hermitian matrices of
the same size is simultaneously {\rm*\!}congruent to a direct sum of
pairs, determined uniquely up to permutation of summands, of the
following four types, each associated with the indicated congruence
canonical matrix
type for $A=\mathcal{H}+i\mathcal{K}$:\medskip%
\begin{equation}\label{table5}
\renewcommand{\arraystretch}{1.2}
\begin{tabular}
[c]{|c|c|}\hline $\text{Type 0: }J_{n}(0)$
    &
$(M_{n},\,iN_{n})$
    \\\hline
$\text{Type I: }\lambda\Delta_{n}$
     &
$\pm\left( \Delta_{n}(1,0),\: \Delta_{n}(c,1)\right)$
  \\
$|\lambda|=1,\ \lambda^{2}\ne -1$
   &
$c\in\mathbb{R}$
   \\\hline
$\text{Type I: }\lambda\Delta_{n}$
  &
$\pm\left( \Delta _{n}(0,1),\: \Delta_{n}(1,0)\right)$
  \\
$\lambda^{2}=-1$&
   \\\hline
$\text{Type II: }H_{2n}(\mu)$
   &
$\left( [I_{n}\,\diagdown\,I_{n}],\:
[J_{n}(a+ib)\,\diagdown\,J_{n}%
(a+ib)^{\ast}]\right)$
  \\
$|\mu| >1$
  &
$a,b\in{\mathbb R},\ a+bi\ne i,\ b>0$
  \\\hline
\end{tabular}
\end{equation}
in which
\[
c=\frac{\operatorname{Im}\lambda} {\operatorname{Re}\lambda},\qquad
a=\frac{2\operatorname{Im}\mu} {|1+\mu|^{2}},\qquad
b=\frac{|\mu|^{2}-1} {|1+\mu|^{2}}.
\]
\end{theorem}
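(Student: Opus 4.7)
The plan is to reduce Theorem~\ref{ths} to Theorem~\ref{t2} using the decompositions $A=\mathcal{S}+\mathcal{C}$ for part~(a), with $\mathcal{S}=(A+A^{T})/2$ and $\mathcal{C}=(A-A^{T})/2$, and $A=\mathcal{H}+i\mathcal{K}$ for part~(b), with $\mathcal{H}=(A+A^{\ast})/2$ and $\mathcal{K}=(A-A^{\ast})/(2i)$. These bijections intertwine simultaneous (*-)congruence of a pair with (*-)congruence of the sum, as noted in the introduction. Existence and uniqueness of a canonical direct-sum decomposition of a pair then follow from Theorem~\ref{t2}, once we verify, block by block, that each canonical matrix $A$ in \eqref{table1} or \eqref{table2} decomposes into a pair simultaneously (*-)congruent to the pair claimed in \eqref{table3}--\eqref{table5}.

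For part~(a) the block computations are short. From $J_{n}(0)\pm J_{n}(0)^{T}=M_{n}$ or $N_{n}$, the Type~0 case gives $\tfrac{1}{2}(M_{n},N_{n})$, rescaled to $(M_{n},N_{n})$ by simultaneous congruence by $\sqrt{2}\,I$. Using $H_{2n}(\mu)=[J_{n}(\mu)\diagdown I_{n}]$ and $J_{n}(\mu)\pm I_{n}=J_{n}(\mu\pm 1)$, one reads off
\[
\mathcal{S}=\tfrac{1}{2}\bigl[J_{n}(\mu+1)\diagdown J_{n}(\mu+1)^{T}\bigr],\quad\mathcal{C}=\tfrac{1}{2}\bigl[J_{n}(\mu-1)\diagdown -J_{n}(\mu-1)^{T}\bigr],
\]
which after the same $\sqrt{2}\,I$ dilation matches the Type~II pair in \eqref{table3}. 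For the $\Gamma_{n}$ blocks, direct inspection of the $\pm 1$-zig-zag pattern shows that $(\Gamma_{n}\pm\Gamma_{n}^{T})/2$ equal $X_{n}$ and $Y_{n}$ on the nose (no dilation needed), in the order $(X_{n},Y_{n})$ for $n$ odd and $(Y_{n},X_{n})$ for $n$ even, the swap being dictated by the parity-dependent pairing of signs along the two central anti-diagonals of $\Gamma_{n}$.

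The alternative Type~II pairs in \eqref{table4} arise from a block-diagonal simultaneous congruence $R=R_{1}\oplus R_{2}$, which carries $[A\diagdown B]$ to $[R_{2}^{T}AR_{1}\diagdown R_{1}^{T}BR_{2}]$. Imposing $R_{2}^{T}J_{n}(\mu+1)R_{1}=I_{n}$ fixes $R_{1}=J_{n}(\mu+1)^{-1}R_{2}^{-T}$ and reduces the second-component equation to the similarity $R_{2}^{T}\bigl(J_{n}(\mu-1)J_{n}(\mu+1)^{-1}\bigr)R_{2}^{-T}=J_{n}(\nu)$. Since $J_{n}(\mu-1)J_{n}(\mu+1)^{-1}=I_{n}-2J_{n}(\mu+1)^{-1}$ is a primary matrix function of the non-derogatory $J_{n}(\mu+1)$, it is itself non-derogatory with unique eigenvalue $\nu=(\mu-1)/(\mu+1)$, hence similar to $J_{n}(\nu)$, so the required $R_{2}$ exists. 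When $\mu=-1$ the parameter $\nu$ is undefined; swapping the roles of the two skew sums and performing the analogous reduction of $J_{n}(0)J_{n}(-2)^{-1}$ produces the second row of \eqref{table4}, with $n$ forced to be odd by $\mu=-1\ne(-1)^{n+1}$.

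Part~(b) runs along the same lines with $A^{\ast}$ in place of $A^{T}$. For Type~0 one finds $(\mathcal{H},\mathcal{K})=(M_{n}/2,-iN_{n}/2)$; a simultaneous *congruence by $\sqrt{2}\,K_{n}$, where $K_{n}$ is the anti-diagonal identity (so that $K_{n}J_{n}(0)K_{n}=J_{n}(0)^{T}$), converts this pair to $(M_{n},iN_{n})$. For Type~I with $\lambda=e^{i\theta}$, $\Delta_{n}^{\ast}=\bar{\Delta}_{n}$ gives $(\mathcal{H},\mathcal{K})=(\Delta_{n}(\cos\theta,-\sin\theta),\Delta_{n}(\sin\theta,\cos\theta))$; when $\lambda^{2}\ne-1$ the matrix $\Delta_{n}(1,0)+i\Delta_{n}(c,1)=\Delta_{n}(1+ic,i)$, with $c=\operatorname{Im}\lambda/\operatorname{Re}\lambda$, has cosquare similar to $J_{n}(\lambda^{2})$, hence lies in the same Type~I *congruence class as $\lambda\Delta_{n}$ (up to the sign choice $\lambda\leftrightarrow-\lambda$), and Theorem~\ref{t2}(b) produces the required simultaneous *congruence taking $(\mathcal{H},\mathcal{K})$ to $\pm(\Delta_{n}(1,0),\Delta_{n}(c,1))$; the intrinsic sign tracks whether $\operatorname{Re}\lambda>0$ or $<0$, since scalar *congruence scales only by positive reals. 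When $\lambda^{2}=-1$ the Hermitian part is supported on the secondary anti-diagonal, and a real diagonal *congruence with $\pm 1$ entries yields $\pm(\Delta_{n}(0,1),\Delta_{n}(1,0))$. For Type~II, the block-diagonal reduction of part~(a) is repeated with $R^{\ast}$ in place of $R^{T}$: the first skew sum is normalised to $[I_{n}\diagdown I_{n}]$ and the *-analogue of the Möbius reduction converts the second to $[J_{n}(a+ib)\diagdown J_{n}(a+ib)^{\ast}]$, the spectrum computation yielding $a=2\operatorname{Im}\mu/|1+\mu|^{2}$ and $b=(|\mu|^{2}-1)/|1+\mu|^{2}$. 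Uniqueness of $\nu,c,a,b$ and of the decomposition up to permutation of summands is inherited from Theorem~\ref{t2}. The principal technical obstacle is the Type~II reduction in both parts, which rests on the functional-calculus fact that a primary matrix function of $J_{n}(\alpha)$ with nonzero derivative at $\alpha$ is again a single Jordan block, combined with the Möbius/Cayley formulas for $\nu$ and $(a,b)$.
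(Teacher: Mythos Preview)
Your proposal follows essentially the same route as the paper: reduce to Theorem~\ref{t2} via the bijections $A\leftrightarrow(\mathcal{S},\mathcal{C})$ and $A\leftrightarrow(\mathcal{H},\mathcal{K})$, then read off each canonical pair from the (Cartesian) decomposition of the corresponding canonical block, applying further block-diagonal $(\ast)$-congruences based on the M\"obius/Cayley substitution to reach the normalised forms in \eqref{table4} and \eqref{table5}. The only substantive deviation is in the Type~I case of part~(b) with $\lambda^{2}=-1$, where the paper introduces an auxiliary real matrix $G_{n}$ whose $\ast$cosquare is similar to $J_{n}(-1)$ and takes its Cartesian decomposition, whereas you work directly from the Cartesian decomposition of $\pm i\Delta_{n}$ and apply a sign-diagonal $\ast$congruence; both are valid.

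One computational point needs correcting. In the Type~II case of part~(b), carrying out your M\"obius reduction directly on the Cartesian decomposition of $H_{2n}(\mu)$ yields, in the $(2,1)$ block of $\mathcal{K}$, a matrix similar to $J_{n}(\nu')$ with
\[
\nu'=-i\,\frac{\mu-1}{\mu+1}=\frac{2\operatorname{Im}\mu-i(|\mu|^{2}-1)}{|\mu+1|^{2}}=a-ib,
\]
so the imaginary part comes out negative rather than positive. The paper circumvents this by first observing that $H_{2n}(\mu)$ is $\ast$congruent to $H_{2n}(\bar{\mu}^{-1})$ (their $\ast$cosquares are both similar to $J_{n}(\mu)\oplus J_{n}(\bar{\mu}^{-1})$) and performing the reduction on the latter, which gives $a+ib$ directly. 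You should insert this step, or else an extra block-swap $\ast$congruence, to match the stated normalisation $b>0$.
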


\section{Congruence}

\label{s.pr1b}

The \emph{cosquare} of a nonsingular complex matrix $A$ is $A^{-T}A$. If $B$
is congruent to $A$, then $A=S^{T}BS$ for some nonsingular $S$ and hence%
\[
A^{-T}A=(S^{T}BS)^{-T}(S^{T}BS)=S^{-1}B^{-T}S^{-T}S^{T}BS=S^{-1}%
(B^{-T}B)S\text{,}%
\]
so \textit{congruent nonsingular matrices have similar cosquares}. The
following lemma establishes the converse assertion; an analogous statement for
arbitrary systems of forms and linear mappings was given in \cite{roi} and
\cite[Theorem 1 and \S \,2]{ser1}.

\begin{lemma}
\label{l_nonsi} Nonsingular complex matrices $A$ and $B$ are congruent if and
only if their cosquares are similar.
\end{lemma}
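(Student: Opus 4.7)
The forward direction is verified in the paragraph preceding the lemma, so the plan focuses on the converse: assuming $A^{-T}A$ is similar to $B^{-T}B$, I want to construct an explicit $S$ with $S^{T}AS=B$.

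The first step is a routine reduction. If $M^{-1}(A^{-T}A)M = B^{-T}B$, replace $A$ by $A' := M^{T}AM$, which is congruent to $A$; using $(M^{T}AM)^{-T}=M^{-1}A^{-T}M^{-T}$ one obtains $(A')^{-T}A' = M^{-1}(A^{-T}A)M = B^{-T}B$. So from now on we may assume $A^{-T}A = B^{-T}B =: C$.

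The heart of the argument is the following observation. Set $N := A^{-1}B$ and $\tau(X) := A^{-1}X^{T}A$; the map $\tau$ is an anti-homomorphism of the matrix algebra, and on polynomials in a single matrix it acts as a homomorphism, so in particular it is the identity on $\mathbb{C}[N]$ whenever $N$ is $\tau$-fixed. I claim that (i) $N$ commutes with $C$, and (ii) $\tau(N)=N$. Property (i) follows from $AA^{-T}=C^{-T}=BB^{-T}$, which is an immediate consequence of the common cosquare; property (ii) reduces to the identity $B^{T}=BC^{-1}$ (another rewriting of $C=B^{-T}B$). Both verifications are a few lines.

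Now I invoke the primary matrix function machinery recalled in the introduction: since $N$ is nonsingular, there is a polynomial $p$ with $S := p(N)$ satisfying $S^{2}=N$. As a polynomial in $N$, the matrix $S$ commutes with $C$, and $\tau(S)=p(\tau(N))=p(N)=S$. Consequently
\[
S^{T}AS \;=\; A\,\tau(S)\,S \;=\; A\,S^{2} \;=\; AN \;=\; B,
\]
so $A$ and $B$ are congruent. The main obstacle, in my view, is spotting this structure: one has to recognize $\tau$ as the natural transpose-with-respect-to-$A$ anti-involution on matrices, notice that $N=A^{-1}B$ is simultaneously $\tau$-fixed and commutes with $C$, and recall that a primary square root of a nonsingular matrix is a polynomial in it, so that $S:=\sqrt{N}$ inherits both properties automatically. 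Once those three facts are in hand, the congruence $S^{T}AS=B$ drops out in one line.
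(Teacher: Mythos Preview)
Your argument is correct and is essentially the paper's proof in different clothing. After the same reduction to equal cosquares, the paper sets $M:=CA^{-1}$ (with $C$ playing the role of your $B$) and checks $MA=AM^{T}$; your $N=A^{-1}B$ is exactly $M^{T}$, and your identity $\tau(N)=N$ is the same equation $N^{T}A=AN$. Both proofs then take a polynomial square root and finish with the one-line congruence. Your property (i), that $N$ commutes with $C$, is true but never used in your final computation, so you could drop it.
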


\begin{proof}
Let $A^{-T}A$ and $B^{-T}B$ be similar via $S$, so that
\begin{equation}
A^{-T}A=S^{-1}B^{-T}BS=\left(  S^{-1}B^{-T}S^{-T}\right)  \left(
S^{T}BS\right)  =C^{-T}C, \label{hbj}%
\end{equation}
in which $C:=S^{T}BS$. It suffices to prove that $C$ is congruent to $A$. Let
$M:=CA^{-1}$ and deduce from \eqref{hbj} that
\[
M=CA^{-1}=C^{T}A^{-T}=\left(  A^{-1}C\right)  ^{T}\text{\ and\ }M^{T}%
=A^{-1}C\text{.}%
\]
Thus,%
\[
C=MA=AM^{T}%
\]
and hence%
\[
q(M)A=Aq(M^{T})=Aq(M)^{T}%
\]
for any polynomial $q(t)$. The theory of primary matrix functions
\cite[Section 6.4]{horn} ensures that there is a polynomial $p(t)$ such that
$p(M)^{2}=M$, so $p(M)$ is nonsingular and%
\[
p(M)A=Ap(M)^{T}\text{.}%
\]
Thus,%
\[
C=MA=p(M)^{2}A=p(M)Ap(M)^{T}%
\]
so $C$ is congruent to $A$ via $p(M)$.
\end{proof}

\begin{proof}
[Proof of Theorem \textrm{\ref{t2}(a)}]Let $A$ be square and nonsingular. The
Jordan Canonical Form of $A^{-T}A$ has a very special structure:
\begin{equation}
\bigoplus_{i=1}^{p}\left(  J_{m_{i}}(\mu_{i})\oplus J_{m_{i}}(\mu_{i}%
^{-1})\right)  \oplus\bigoplus_{j=1}^{q}J_{n_{j}}((-1)^{n_{j}+1})\text{,\quad
}0\neq\mu_{i}\neq(-1)^{m_{i}+1}\text{;} \label{non3}%
\end{equation}
see \cite[Theorem 2.3.1]{Wall} or \cite[Theorem 3.6]{bal}. Using \eqref{non3},
form the matrix
\[
B=\bigoplus_{i=1}^{p}H_{2m_{i}}(\mu_{i})\,\oplus\bigoplus_{j=1}^{q}%
\Gamma_{n_{j}}.
\]
Since the cosquare
\[
H_{2m}(\mu)^{-T}H_{2m}(\mu)=%
\begin{bmatrix}
0 & I_{m}\\
J_{m}(\mu)^{-T} & 0
\end{bmatrix}%
\begin{bmatrix}
0 & I_{m}\\
J_{m}(\mu) & 0
\end{bmatrix}
=%
\begin{bmatrix}
J_{m}(\mu) & 0\\
0 & J_{m}(\mu)^{-T}%
\end{bmatrix}
\]
is similar to $J_{m}(\mu)\oplus J_{m}(\mu^{-1})$ and the cosquare%
\[
\Gamma_{n}^{-T}\Gamma_{n}=(-1)^{n+1}%
\begin{bmatrix}
\vdots & \vdots & \vdots & \vdots &
\ddd \\
-1 & -1 & -1 & -1 & \\
1 & 1 & 1 &  & \\
-1 & -1 &  &  & \\
1 &  &  &  & 0%
\end{bmatrix}
^{T}\!\!\!\!\cdot\Gamma_{n}=(-1)^{n+1}%
\begin{bmatrix}
1 & 2 &  & \star\\
& 1 & \ddots & \\
&  & \ddots & 2\\
0 &  & & 1
\end{bmatrix}
\]
is similar to $J_{n}((-1)^{n+1})$, \eqref{non3} is the Jordan
Canonical Form of $B^{-T}B$. Hence $A$ and $B$ have similar
cosquares. Lemma \ref{l_nonsi} now ensures that $A$ and $B$ are
congruent. Moreover, for any $C$ that is congruent to $A$, its
cosquare $C^{-T}C$ is similar to $A^{-T}A$, so it has the Jordan
Canonical Form \eqref{non3}, which is uniquely determined up to
permutation of summands. Hence $B$ is uniquely determined up to
permutation of its direct summands.
\end{proof}

\medskip Our proof of Theorem \ref{t2}(a) shows that the congruence canonical
form of a square complex matrix $A$ can be constructed as follows:

\begin{enumerate}
\item Use the regularizing algorithm in \textrm{\cite[Section 2]{hor-ser1}} to
construct a regularizing decomposition \eqref{1.1} of $A$ (if desired, one may
use only unitary transformations in that algorithm).

\item Let $B$ be the regular part of $A$ and determine the Jordan canonical
form (\ref{non3}) of its cosquare $B^{-T}B$.

\item Then
\[
\bigoplus_{i=1}^{p}\,H_{2m_{i}}(\mu_{i})\oplus\bigoplus_{j=1}^{q}\Gamma
_{n_{j}}\oplus J_{r_{1}}(0)\oplus\cdots\oplus J_{r_{p}}(0)
\]
is the congruence canonical form of $A$.
\end{enumerate}

\section{*Congruence}

\label{s.pr1x}

The *\emph{cosquare} of a nonsingular complex matrix $A$ is $\mathcal{A}%
=A^{-\ast}A$. If $B$ is *congruent to $A$, then $A=S^{\ast}BS$ for some
nonsingular $S$ and hence%
\[
A^{-\ast}A=(S^{\ast}BS)^{-\ast}(S^{\ast}BS)=S^{-1}B^{-\ast}S^{-\ast}S^{\ast
}BS=S^{-1}(B^{-\ast}B)S\text{,}%
\]
so *congruent nonsingular matrices have similar *cosquares. However, $[-1]$ is
the *cosquare of both $[i]$ and $[-i]$, which are not *congruent: there is no
nonzero complex $s$ such that $-i=\bar{s}is=|s|^{2}i$. Nevertheless, there is
a useful analog of Lemma \ref{l_nonsi} for *congruence. We denote the set of
\textit{distinct} eigenvalues of a square matrix $X$ by $\operatorname{dspec}%
X$.

\begin{lemma}
\label{Lemma*congrence}Let $A$ and $B$ be nonsingular $n$-by-$n$
complex matrices with similar *cosquares, that is,
$A^{-\ast}A=S^{-1}(B^{-\ast}B)S$ for some nonsingular $S$. Let
$B_{S}:=S^{\ast}BS$, let $M:=B_{S}A^{-1}$, and suppose $M$ has $k$
real negative eigenvalues, counted according to their algebraic
multiplicities $(0\leq k\leq n)$. Then:
\newline
(a) $M$ is similar to a real matrix.
\newline
(b) There are square complex matrices $D_{-}$ and $D_{+}$ of size
$k$ and $n-k$, respectively, such that $A$ is *congruent to
$\left(  -D_{-}\right) \oplus D_{+}$ and $B$ is *congruent to
$D_{-}\oplus D_{+}$.
\end{lemma}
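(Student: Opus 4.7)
The plan is to extract the structural identity $MA = AM^*$ from the hypothesis and then mirror Lemma \ref{l_nonsi} while accommodating the subtlety caused by the negative real eigenvalues of $M$. Starting from $A^{-*}A = S^{-1}(B^{-*}B)S$, one computes $A^{-*}A = B_S^{-*}B_S$, which rearranges to $MA = AM^*$. Part (a) is then immediate: $M = A M^* A^{-1}$ shows $M \sim M^*$, and combining this with $M^* = \overline{M^T} \sim \bar M$ (since every square matrix is similar to its transpose) gives $M \sim \bar M$, which is the standard criterion for $M$ to be similar to a real matrix.

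For part (b), the strategy is to find a single basis that simultaneously block-diagonalizes $A$ and $B_S$ under *congruence into blocks of sizes $k$ and $n-k$, and then handle each block by a real-coefficient square-root trick as in Lemma \ref{l_nonsi}. By (a), $\operatorname{spec}(M)$ is closed under complex conjugation, so the partition $\operatorname{spec}(M) = \Lambda_- \cup \Lambda_+$ (real negatives versus the rest) has both pieces closed under conjugation; the spectral projectors $P_\pm$ of $M$ onto the corresponding generalized eigenspaces can therefore be chosen as polynomials in $M$ with real coefficients, and $MA = AM^*$ then yields $P_\pm A = A P_\pm^*$. The delicate point is that the adapted basis for *congruence is $V_-^* \oplus V_+^*$ rather than the naive $V_- \oplus V_+$, where $V_\pm := \operatorname{Im}(P_\pm)$ and $V_\pm^* := \operatorname{Im}(P_\pm^*)$; the linchpin is the Hermitian orthogonality $V_-^\perp = \ker(P_-^*) = \operatorname{Im}(P_+^*) = V_+^*$. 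Taking $T = [T_-\,|\,T_+]$ with $T_\pm$ spanning $V_\pm^*$, the off-diagonal blocks of $T^*AT$ vanish because $A(V_\pm^*) = V_\pm$ is orthogonal to $V_\mp^*$; the same argument applied to $B_S = MA$, using that $M$ preserves $V_\pm$, shows that $T^* B_S T$ is also block-diagonal. Writing the blocks as $A_\pm'$ and $B_\pm'$, one verifies that $\operatorname{spec}(B_\pm'(A_\pm')^{-1}) \subseteq \Lambda_\pm$.

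Each block is then reduced separately. On the $+$ block, $\tilde{M}_+ := B_+'(A_+')^{-1}$ has no negative real eigenvalues, so its principal square root $R_+$ is a polynomial in $\tilde{M}_+$ with real coefficients, and $R_+ A_+' R_+^* = \tilde{M}_+ A_+' = B_+'$ shows that $A_+'$ is *congruent to $B_+'$. On the $-$ block, $-\tilde{M}_-$ has positive real spectrum, so applying the same argument to the pair $(-A_-', B_-')$ yields that $-A_-'$ is *congruent to $B_-'$. Setting $D_- := B_-'$ and $D_+ := A_+'$ then delivers the required direct-sum decompositions of $A$ and $B$. I expect the main obstacle to be the simultaneous block-diagonalization through the non-obvious basis $V_-^* \oplus V_+^*$; once the orthogonality $V_-^\perp = V_+^*$ is in hand, the rest is a block-by-block adaptation of the square-root trick of Lemma \ref{l_nonsi}.
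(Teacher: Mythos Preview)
Your argument is correct and follows the same arc as the paper's: derive $B_S = MA = AM^{\ast}$, conclude $M\sim M^{\ast}$ so $M$ is similar to a real matrix, split off the negative-eigenvalue part, and finish each block with a real-coefficient square root.  The only substantive difference is how you achieve the simultaneous block-diagonalization: you build the basis from $V_{\pm}^{\ast}=\operatorname{Im}(P_{\pm}^{\ast})$ and argue via the Hermitian orthogonality $V_{\pm}^{\perp}=V_{\mp}^{\ast}$, whereas the paper chooses any $T$ with $TMT^{-1}=M_{-}\oplus M_{+}$ and then reads off $TAT^{\ast}=A_{11}\oplus A_{22}$ directly from Sylvester's theorem applied to $(M_{-}\oplus M_{+})(TAT^{\ast})=(TAT^{\ast})(M_{-}^{\ast}\oplus M_{+}^{\ast})$.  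Both routes are valid; the paper's sidesteps the projector bookkeeping and the ``delicate point'' you flag, since once $M$ is block-diagonalized by similarity the disjointness $\operatorname{dspec}M_{-}\cap\operatorname{dspec}M_{+}^{\ast}=\varnothing$ does all the work.
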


\begin{proof}
We have%
\[
A^{-\ast}A=S^{-1}(B^{-\ast}B)S=\left(  S^{-1}B^{-\ast}S^{-\ast}\right)
\left(  S^{\ast}BS\right)  =B_{S}^{-\ast}B_{S}\text{,}%
\]
from which it follows that%
\[
M=B_{S}A^{-1}=B_{S}^{\ast}A^{-\ast}=\left(  A^{-1}B_{S}\right)  ^{\ast}%
\quad\text{and}\quad M^{\ast}=A^{-1}B_{S},%
\]
and hence%
\begin{equation}
B_{S}=MA=AM^{\ast}\text{.} \label{A*B2}%
\end{equation}
Thus, $M^{\ast}=A^{-1}MA$, so $M$ is similar to a real matrix \cite[Theorem
4.1.7]{HJ1}. Its Jordan blocks with nonreal eigenvalues occur in conjugate
pairs, so there is a nonsingular $T$ such that $TMT^{-1}=M_{-}\oplus M_{+}$,
in which the $k$-by-$k$ matrix $M_{-}$ is either absent or has only real
negative eigenvalues; $M_{+}$ has no negative eigenvalues and is similar to a
real matrix if it is present. Moreover, if we partition $TAT^{\ast}=\left[
A_{ij}\right]  _{i,j=1}^{2}$ conformally to $M_{-}\oplus M_{+}$, Sylvester's
Theorem on linear matrix equations \cite[Theorem 4.4.6]{horn} ensures that
\begin{equation}
TAT^{\ast}=A_{11}\oplus A_{22} \label{kyd}%
\end{equation}
since the equalities
$\operatorname{dspec}M_{-}\cap
\operatorname{dspec}M_{+}=\varnothing$, $\operatorname{dspec}M_{-}%
=\operatorname{dspec}M_{-}^{\ast}$, and $\operatorname{dspec}M_{+}%
=\operatorname{dspec}M_{+}^{\ast}$ imply that
\begin{align*}
TB_{S}T^{\ast}  &  =\left(  TMT^{-1}\right)  \left(  TAT^{\ast}\right)
=\left(  TAT^{\ast}\right)  \left(  T^{-\ast}M^{\ast}T^{\ast}\right) \\
&  =\left(  M_{-}\oplus M_{+}\right)  \left(  TAT^{\ast}\right)  =\left(
TAT^{\ast}\right)  \left(  M_{-}^{\ast}\oplus M_{+}^{\ast}\right) \\
&  =\left[
\begin{array}
[c]{cc}%
M_{-}A_{11} & M_{-}A_{12}\\
M_{+}A_{21} & M_{+}A_{22}%
\end{array}
\right]  =\left[
\begin{array}
[c]{cc}%
A_{11}M_{-}^{\ast} & A_{12}M_{+}^{\ast}\\
A_{21}M_{-}^{\ast} & A_{22}M_{+}^{\ast}%
\end{array}
\right] \\
&  =M_{-}A_{11}\oplus M_{+}A_{22}=A_{11}M_{-}^{\ast}\oplus A_{22}M_{+}^{\ast}.
\end{align*}
Thus,%
\[
\left(  -M_{-}\right)  ]A_{11}=A_{11}(-M_{-}^{\ast})\quad\text{and}\quad
M_{+}A_{22}=A_{22}M_{+}^{\ast},%
\]
so%
\[
q_{1}(-M_{-})A_{11}=A_{11}q_{1}(-M_{-}^{\ast})=A_{11}q_{1}(-M_{-})^{\ast}%
\]
and%
\[
q_{2}(M_{+})A_{22}=A_{22}q_{2}(M_{+}^{\ast})=A_{22}q_{2}(M_{+})^{\ast}%
\]
for any polynomials $q_{1}(t)$ and $q_{2}(t)$ with real coefficients. Neither
$-M_{-}$ nor $M_{+}$ has any negative eigenvalues and each is similar to a
real matrix, so \cite[Theorem 2(c)]{HP} ensures that there are polynomials
$g(t)$ and $h(t)$ with real coefficients such that $g(-M_{-})^{2}=-M_{-}$ and
$h(M_{+})^{2}=M_{+}$. It follows that $B$ is *congruent to%
\begin{align*}
TB_{S}T^{\ast}  &  =-\left(  -M_{-}A_{11}\right)  \oplus M_{+}A_{22}%
=-g(M_{-})^{2}A_{11}\oplus h(M_{+})^{2}A_{22}\\
&  =-g(-M_{+})A_{11}g(-M_{+})^{\ast}\oplus h(M_{+})A_{22}h(M_{+})^{\ast}%
=D_{-}\oplus D_{+},%
\end{align*}
in which $D_{-}=-g(-M_{+})A_{11}g(-M_{+})^{\ast}$ and $D_{+}=h(M_{+}%
)A_{22}h(M_{+})^{\ast}$; $A$ is *congruent to $\left(  -D_{-}\right)  \oplus
D_{+}$ by (\ref{kyd}) and $B$ is *congruent to $D_{-}\oplus D_{+}$.
\end{proof}

\medskip The result cited from \cite[Theorem 2(c)]{HP} gives sufficient conditions for $f(X)=Y$
to have a real solution $X$ for a given real $Y$. The key conditions
are that $f$ is analytic and one-to-one on a domain that is
symmetric with respect to the real axis and $f^{-1}$ is typically
real, that is, $f(\bar{z})=\overline{f(z)}$ on the range of $f$.
Under these conditions there is a solution $X$ that is a polynomial
in $Y$ with real coefficients. In the case at hand, $f(z)=z^{2}$ on
the open right half-plane; this special case appears in \cite[p.
545]{Hua} and was employed in \cite[p. 356]{thom} and \cite[Lemma
7.2]{L-R} to study canonical pairs of Hermitian matrices.

Two noteworthy special cases of Lemma \ref{Lemma*congrence} occur
when $M$ either has only positive eigenvalues ($k=n$) or only
negative eigenvalues ($k=0$). In the former case, $A$ is
*congruent to $B$; in the latter case, $A$ is *congruent to $-B$.

\begin{proof}[Proof of Theorem
{\ref{t2}(b)}: Existence] Let
$A$ be nonsingular and
let $\mathcal{A}=A^{-\ast}A$ denote its *cosquare. Because $\mathcal{A}%
^{-\ast}=(A^{-\ast}A)^{-\ast}=AA^{-\ast}=A\mathcal{A}A^{-1}$, for each
eigenvalue $\lambda$ of $\mathcal{A}$ and each $k=1,2,\ldots$, $J_{k}%
(\lambda)$ and $J_{k}(\bar{\lambda}^{-1})$ have equal multiplicities in the
Jordan Canonical Form of $\mathcal{A}$. Since $\lambda=\bar{\lambda}^{-1}$
whenever $|\lambda|=1$, this pairing is trivial for any eigenvalue of
$\mathcal{A}$ that has modulus one; it is nontrivial for eigenvalues
$\mathcal{A}$ whose modulus is greater than one.

Let%
\begin{equation}
\bigoplus_{i=1}^{p}\left(  J_{m_{i}}(\mu_{i})\oplus J_{m_{i}}(\bar{\mu}%
_{i}^{-1})\right)  \oplus\bigoplus_{j=1}^{q}J_{n_{j}}(e^{i\phi_{j}}%
),\quad\left\vert \mu_{i}
\right\vert >1\text{,\ }0\leq\phi_{j}<2\pi\label{JCF}%
\end{equation}
be the Jordan Canonical Form
of $\mathcal{A}$ and use it
to construct the
matrix%
\begin{equation}
B:=\bigoplus_{i=1}^{p}H_{2m_{i}}(\mu_{i})\oplus\bigoplus_{j=1}^{q}e^{i\phi
_{j}/2}\Delta_{n_{j}}\text{.} \label{JCF2}%
\end{equation}
Since the *cosquare
\[
H_{2m}(\mu)^{-\ast}H_{2m}(\mu)=%
\begin{bmatrix}
0 & I_{m}\\
J_{m}(\mu)^{-\ast} & 0
\end{bmatrix}%
\begin{bmatrix}
0 & I_{m}\\
J_{m}(\mu) & 0
\end{bmatrix}
=%
\begin{bmatrix}
J_{m}(\mu) & 0\\
0 & J_{m}(\mu)^{-\ast}%
\end{bmatrix}
\]
is similar to $J_{m}(\mu)\oplus J_{m}(\bar{\mu}^{-1})$ and the *cosquare%
\begin{equation}\label{mdt}
\Delta_{n}^{-\ast}\Delta_{n}=%
\begin{bmatrix}
1 & 2i &  & \star\\
& 1 & \ddots & \\
&  & \ddots & 2i\\
0 & & & 1
\end{bmatrix}
\end{equation}
is similar to $J_{n}(1)$,
(\ref{JCF}) is also the
Jordan Canonical Form of
$B^{-\ast}B$. Hence $A$ and
$B$ have similar *cosquares,
whose common Jordan Canonical
Form contains $2p+q$ Jordan
blocks ($p$ block pairs of
the form $J_{m}(\mu)\oplus
J_{m}(\bar{\mu}^{-1})$ with
$\left\vert \mu\right\vert
>1$ and $q$ blocks of the
form $\lambda\Delta_{n}$ with
$\left\vert \lambda
\right\vert =1$).

Lemma \ref{Lemma*congrence} ensures that there are matrices $D_{+}$ and
$D_{-}$ such that $A$ is *congruent to $(-D_{-})\oplus D_{+}$ and $B$ is
*congruent to $D_{-}\oplus D_{+}$. If $D_{-}$ is absent, then $A$ is
*congruent to $B$; if $D_{+}$ is absent, then $A$ is *congruent to $-B$. In
both of these cases $A$ is *congruent to a direct sum of the form
\begin{equation}
\bigoplus_{i=1}^{p}\varepsilon_{i}H_{2m_{i}}(\mu_{i})\oplus\bigoplus_{j=1}%
^{q}\delta_{j}e^{i\phi_{j}/2}\Delta_{n_{j}},\quad\varepsilon_{i},\delta_{j}%
\in\{-1,+1\}\text{.} \label{luyf}%
\end{equation}
If both direct summands $D_{-}$ and $D_{+}$ are present, then their sizes are
less than the size of $A$. Reasoning by induction, we may assume that each of
$D_{-}$ and $D_{+}$ is *congruent to a direct sum of the form (\ref{luyf}).
Then $A$ is *congruent to a direct sum of the form (\ref{luyf}) as well. We
may take all $\varepsilon_{i}=1$ in (\ref{luyf}) since each $H_{2m}(\mu)$ is
*congruent to $-H_{2m}(\mu)$:%
\[
\left[
\begin{array}
[c]{cc}%
I_{m} & 0\\
0 & -I_{m}%
\end{array}
\right]  \left[
\begin{array}
[c]{cc}%
0 & I_{m}\\
J_{m}(\mu) & 0
\end{array}
\right]  \left[
\begin{array}
[c]{cc}%
I_{m} & 0\\
0 & -I_{m}%
\end{array}
\right]  =-\left[
\begin{array}
[c]{cc}%
0 & I_{m}\\
J_{m}(\mu) & 0
\end{array}
\right]  \text{.}%
\]
\end{proof}

\medskip We have demonstrated that a nonsingular $A$ is *congruent to a direct
sum of Type I and Type II blocks%
\begin{equation}
\bigoplus_{i=1}^{p}H_{2m_{i}}(\mu_{i})\oplus\bigoplus_{j=1}^{q}\delta
_{j}e^{i\phi_{j}/2}\Delta_{n_{j}},\quad\delta_{j}\in\{-1,+1\}\text{,\ }%
\left\vert \mu_{i}\right\vert >1\text{,\ }0\leq\phi_{j}<2\pi,\label{JCF3}%
\end{equation}
in which the sizes $2m_{i}$ and parameters $\mu_{i}$ of the Type II blocks as
well as the sizes $n_{j}$ and squared parameters $(\delta_{j}e^{i\phi_{j}%
/2})^{2}=e^{i\phi_{j}}$ of the Type I blocks are uniquely determined by $A$.
Our reduction algorithm using Lemma \ref{Lemma*congrence} determines a set of
signs $\{\delta_{j}\}$ that gives the desired *congruence of $A$ to
(\ref{JCF3}), but we must show that no other choice of signs is possible: in
the set of Type I blocks in (\ref{JCF3}) with equal sizes $n_{j}$ and equal
coefficients $e^{i\phi_{j}/2}$ the number of blocks with sign equal to $+1$
(and hence also the number of blocks with signs equal to $-1$) is uniquely
determined by $A$.\medskip\

\begin{proof}
[Proof of Theorem \textrm{\ref{t2}(b): Uniqueness}]Let each of $A$ and $B$ be
a direct sum of Type I and Type II blocks and suppose that $A$ and $B$ are
*congruent. We have shown that $A$ and $B$ have the form%
\[
A=\bigoplus_{i=1}^{p}H_{2m_{i}}(\mu_{i})\oplus\bigoplus_{j=1}^{q}\lambda
_{j}\Delta_{n_{j}},\quad B=\bigoplus_{i=1}^{p}H_{2m_{i}}(\mu_{i}%
)\oplus\bigoplus_{j=1}^{q}\kappa_{j}\lambda_{j}\Delta_{n_{j}},%
\]
in which all $\kappa_{j}\in\{-1,+1\}$, all\ $\left\vert \mu_{i}\right\vert
>1$,\ and all $\left\vert \lambda_{j}\right\vert =1$. Our goal is to prove
that each of these direct sums may be obtained from the other by a permutation
of summands. We may rearrange the summands to present $A=A_{1}\oplus A_{2}$
and $B=B_{1}\oplus B_{2}$, in which%
\[
A_{1}=\bigoplus_{r=1}^{k}\lambda_{r}\Delta_{n_{r}},\quad B_{1}=\bigoplus
_{r=1}^{k}\kappa_{r}\lambda_{r}\Delta_{n_{r}},%
\]
and $\lambda_{1}^{2}=\cdots=\lambda_{k}^{2}\neq\lambda_{\ell}^{2}$ for all
$\ell=k+1,\ldots,q$. Let $S^{\ast}AS=B$ and partition $S=[S_{ij}]_{i,j=1}^{2}$
conformally with $A_{1}\oplus A_{2}$. Since the *cosquares of $A$ and $B$ are
similar via $S$, we have $S\left(  B^{-\ast}B\right)  =\left(  A^{-\ast
}A\right)  S$ and hence%
\[
\left[
\begin{array}
[c]{cc}%
S_{11}\left(  B_{1}^{-\ast}B_{1}\right)  & S_{12}\left(  B_{2}^{-\ast}%
B_{2}\right) \\
S_{21}\left(  B_{1}^{-\ast}B_{1}\right)  & S_{22}\left(  B_{2}^{-\ast}%
B_{2}\right)
\end{array}
\right]  =\left[
\begin{array}
[c]{cc}%
\left(  A_{1}^{-\ast}A_{1}\right)  S_{11} & \left(  A_{1}^{-\ast}A_{1}\right)
S_{12}\\
\left(  A_{2}^{-\ast}A_{2}\right)  S_{21} & \left(  A_{2}^{-\ast}A_{2}\right)
S_{22}%
\end{array}
\right]  \text{.}%
\]
But
\[\operatorname{dspec}\left(  B_{2}^{-\ast}B_{2}\right)  \cap
\operatorname{dspec}\left(  A_{1}^{-\ast}A_{1}\right) =\varnothing
\]
and
\[\operatorname{dspec}\left( B_{1}^{-\ast}B_{1}\right)  \cap
\operatorname{dspec}\left( A_{2}^{-\ast}A_{2}\right) =\varnothing,
\]
so Sylvester's Theorem on linear matrix equations ensures that
$S=S_{11}\oplus S_{22}$. Thus, $A_{1}$ is *congruent to $B_{1}$
via $S_{11}$ and hence it suffices to consider the case $A=A_{1}$
and $B=B_{1}$. Moreover, dividing both $A$ and $B$ by
$\lambda_{1}$ it suffices to consider a pair of *congruent
matrices of the form%
\begin{equation}
A=\bigoplus_{r=1}^{k}\varepsilon_{r}\Delta_{n_{r}}\text{,}\;B=\bigoplus
_{r=1}^{k}\delta_{r}\Delta_{n_{r}},\quad\varepsilon_{r},\delta_{r}%
\in\{-1,+1\}\text{.} \label{U1}%
\end{equation}

We may assume that the summands in (\ref{U1}) are arranged so that $1\leq
n_{1}\leq\cdots\leq n_{k}$. Define%
\begin{equation}
N:=(n_{1},\dots,n_{k})\quad\text{and}\quad|N|:=n_{1}+\cdots+n_{k}\text{.}
\label{a20}%
\end{equation}
Let%
\begin{equation}
J_{N}:=J_{n_{1}}(0)\oplus\cdots\oplus J_{n_{k}}(0)\text{,} \label{njh}%
\end{equation}
denote%
\[
\Delta_{N}:=\Delta_{n_{1}}\oplus\cdots\oplus\Delta_{n_{k}} \quad
\text{and}\quad\mathcal{D}_{N}:=\Delta_{N}^{-\ast}\Delta_{N}\text{,}%
\]
and let
\[
P_{N}:=P_{n_{1}}\oplus\cdots\oplus P_{n_{k}}\text{,}%
\]
in which%
\[
P_{n}:=%
\begin{bmatrix}
0 &  & 1\\
&
\ddd & \\
1 &  & 0
\end{bmatrix}
\]
is the $n$-by-$n$ \emph{reversal matrix}.

Since $P_{N}\Delta_{N}=I+iJ_{N}$ and $J_{N}$ is nilpotent,%
\begin{align*}
\mathcal{D}_{N}  &  =\Delta_{N}^{-\ast}\Delta_{N}=\left(  P_{N}\Delta
_{N}^{\ast}\right)  ^{-1}P_{N}\Delta_{N}=(I-iJ_{N})^{-1}(I+iJ_{N})\\
&  =(I+iJ_{N}+i^{2}J_{N}^{2}+i^{3}J_{N}^{3}+\cdots)(I+iJ_{N})\\
&  =I+2iJ_{N}+2i^{2}J_{N}^{2}+2i^{3}J_{N}^{3}+\cdots
\end{align*}
is a polynomial in $J_{N}$. Moreover, $J_{N}=i(I-\mathcal{D}_{N}%
)(I+\mathcal{D}_{N})^{-1}$ and $(I+\mathcal{D}_{N})^{-1}$ is a polynomial in
$\mathcal{D}_{N}$, so $J_{N}$ is a polynomial in $\mathcal{D}_{N}$.

Let $C$ be $|N|$-by-$|N|$ and partition $C=[C_{ij}]_{i,j=1}^{k}$ conformally
to $J_{N}$, so each block $C_{ij}$ is $n_{i}$-by-$n_{j}$. Then $C$ commutes
with $J_{N}$ if and only if each block $C_{ij}$ has the form
\begin{equation}
C_{ij}=%
\begin{cases}%
\begin{bmatrix}
&  & c_{ij} & c_{ij}^{(2)} & \dots & c_{ij}^{(n_{i})}\\
&  &  & c_{ij} & \ddots & \vdots\\
&  &  &  & \ddots & c_{ij}^{(2)}\\
0 &  &  &  &  & c_{ij}%
\end{bmatrix}
& \text{if $i\leq j$,}\\
& \\[-4mm]%
\begin{bmatrix}
c_{ij} & c_{ij}^{(2)} & \dots & c_{ij}^{(n_{j})}\\
& c_{ij} & \ddots & \vdots\\
&  & \ddots & c_{ij}^{(2)}\\
&  &  & c_{ij}\\
&  &  & \\
0 &  &  &
\end{bmatrix}
& \text{if $i>j$;}%
\end{cases}
\label{a21}%
\end{equation}
see \cite[Section VIII, \S \,2]{gan} or \cite[Lemma 4.4.11]{horn}. Each
diagonal block $C_{ii}$ is upper Toeplitz; each block $C_{ij}$ with $i<j$ has
an upper Toeplitz submatrix that is preceded by a zero block; each block
$C_{ij}$ with $i>j$ has an upper Toeplitz submatrix with a zero block below
it. An $|N|$-by-$|N|$ matrix whose blocks have the form (\ref{a21}) is said to
be $N$\textit{-upper Toeplitz}.

If $C$ is $N$-upper Toeplitz, then so is $P_{N}C^{\ast}P_{N}$; the upper
Toeplitz submatrix in its $i,j$ block is the complex conjugate of the upper
Toeplitz submatrix in the $j,i$ block of $C$. The matrices $\mathcal{D}_{N}$,
$P_{N}A$, and $P_{N}B$ (see (\ref{U1})) are all $N$-upper Toeplitz.

Let $C=[C_{ij}]_{i,j=1}^{k}be$ a given $N$-upper Toeplitz matrix. Consider the
mapping $C\mapsto\underline{C}$ that takes $C$ into the $k\times k$ matrix
whose $i,j$ entry is $c_{ij}$ if $n_{i}=n_{j}$ and is $0$ otherwise; see
(\ref{a21}). Partition the entries of $N$ (already nondecreasingly ordered)
into groups of equal entries
\[
1\leq n_{1}=\cdots=n_{r}<n_{r+1}=\cdots=n_{l}<n_{l+1}=\cdots=n_{t}<\cdots
\]
and observe that $\underline{C}$ is structurally block diagonal:%
\begin{equation}
\underline{C}=C_{1}\oplus C_{2}\oplus C_{3}\oplus\cdots\label{lok}%
\end{equation}
The sizes of the direct summands of $\underline{C}$ are the multiplicities of
the entries of $N$, that is,
\begin{equation}
C_{1}:=%
\begin{bmatrix}
c_{11} & \dots & c_{1r}\\
\vdots & \ddots & \vdots\\
c_{r1} & \dots & c_{rr}%
\end{bmatrix}
,\quad C_{2}:=%
\begin{bmatrix}
c_{r+1,r+1} & \dots & c_{r+1,\ell}\\
\vdots & \ddots & \vdots\\
c_{\ell,r+1} & \dots & c_{\ell\ell}%
\end{bmatrix}
,\,\dots\label{mkg}%
\end{equation}
In addition, for any $N$-upper Toeplitz matrix $D$ we have $\underline
{CD}=\underline{C}\,\cdot\,\underline{D}$. If $C$ is nonsingular, we have
$I=\underline{CC^{-1}}=\underline{C}\,\cdot\,\underline{C^{-1}}$, which
implies that $\underline{C}$ is nonsingular. A computation reveals that
$\underline{P_{N}C^{\ast}P_{N}}=(\underline{C})^{\ast}$.

Finally, consider the *congruent matrices $A$ and $B$ in (\ref{U1}), which
satisfy $A^{-\ast}A=B^{-\ast}B=\mathcal{D}_{N}$. We have $\underline{P_{N}%
A}=\operatorname{diag}(\varepsilon_{1},\ldots,\varepsilon_{k})$ and
$\underline{P_{N}B}=\operatorname{diag}(\delta_{1},\ldots,\delta_{k})$. Let
$S^{\ast}AS=B$, so $B^{-\ast}B=S^{-1}\left(  A^{-\ast}A\right)  S$ and hence
\[
S(B^{-\ast}B)=S\mathcal{D}_{N}=\mathcal{D}_{N}S=\left(  A^{-\ast}A\right)
S\text{.}%
\]
Since $S$ commutes with $\mathcal{D}_{N}$ and $J_{N}$ is a polynomial in
$\mathcal{D}_{N}$, $S$ commutes with $J_{N}$ and hence is $N$-upper Toeplitz.
Moreover,
\[
P_{N}B=P_{N}S^{\ast}AS=\left(  P_{N}S^{\ast}P_{N}\right)  \left(
P_{N}A\right)  S
\]
and each of the matrices $P_{N}B$, $P_{N}A$, $S$, and $P_{N}S^{\ast}P_{N}$ is
$N$-upper Toeplitz. Therefore,%
\[
\underline{P_{N}B}=\left(  \underline{P_{N}S^{\ast}P_{N}}\right)  \left(
\underline{P_{N}A}\right)  \underline{S}\text{,}%
\]
that is,%
\[
\operatorname{diag}(\delta_{1},\ldots,\delta_{k})=\underline{S}^{\ast
}\operatorname{diag}(\varepsilon_{1},\ldots,\varepsilon_{k})\underline
{S}\text{.}%
\]
But $\underline{S}$ is nonsingular and block diagonal:
\[
\underline{S}=S_{1}\oplus S_{2}\oplus S_{3}\oplus\cdots,
\]
in which the respective nonsingular blocks $S_{1},S_{2},...$ are the same size
as the respective blocks $C_{1},C_{2},...$ in (\ref{mkg}). Thus,
$\operatorname{diag}(\delta_{1},\ldots,\delta_{r})=S_{1}^{\ast}%
\operatorname{diag}(\varepsilon_{1},\ldots,\varepsilon_{r})S_{1},$
$\operatorname{diag}(\delta_{r+1},\ldots,\delta_{\ell})=S_{2}^{\ast
}\operatorname{diag}(\varepsilon_{r+1},\ldots,\varepsilon_{\ell})S_{2}$, etc.
Sylvester's Inertia Theorem ensures that $\operatorname{diag}(\delta
_{1},\ldots,\delta_{r})$ can be obtained from $\operatorname{diag}%
(\varepsilon_{1},\ldots,\varepsilon_{r})$ by a permutation of its diagonal
entries, $\operatorname{diag}(\delta_{r+1},\ldots,\delta_{\ell})$ can be
obtained from $\operatorname{diag}(\varepsilon_{r+1},\ldots,\varepsilon_{\ell
})$ by a permutation of its diagonal entries, etc. Therefore, each of the
direct sums (\ref{U1}) can be obtained from the other by a permutation of summands.
\end{proof}

\medskip The argument that we have just made also clarifies the final
assertion in Theorem \ref{t2}: A *cosquare $F_{n}^{-\ast}F_{n}$ is similar to
$J_{n}(\lambda)$ with $|\lambda|=1$ if and only if it is not decomposable into
a nontrivial direct sum under similarity.

\section{An alternative algorithm for *congruence\label{Alternative}}

Although we can now determine the *congruence canonical form of a nonsingular
complex matrix $A$, in practice it is useful to have an alternative algorithm.

Let $\mathcal{A}=A^{-\ast}A$, let $\mu_{1},\ldots,\mu_{r}$ be the distinct
eigenvalues of $\mathcal{A}$ with modulus greater than one, and let
$\lambda_{1},\ldots,\lambda_{s}$ be the distinct eigenvalues of $\mathcal{A}$
with modulus one. Let $S$ be any nonsingular matrix such that
\begin{equation}
A^{-\ast}A=S(C_{1}\oplus\cdots\oplus C_{r}\oplus C_{r+1}\oplus\cdots\oplus
C_{r+s})S^{-1}\text{,} \label{alg1}%
\end{equation}
in which $\operatorname{dspec}C_{i}=\{\mu_{i},\bar{\mu}_{i}^{-1}\}$ for
$i=1,\ldots,r$ and $\operatorname{dspec}C_{r+i}=\{\lambda_{i}\}$ for
$i=1,\ldots,s$. One way to achieve this decomposition is to group together
blocks from the Jordan Canonical Form of $\mathcal{A}$, but other strategies
may be employed. Partition $S^{\ast}AS=[A_{ij}]_{i,j=1}^{r+s}$ conformally to
the direct sum in (\ref{alg1}). The argument in the proof of uniqueness in
Section \ref{s.pr1x}\ shows that $S^{\ast}AS$ is block diagonal:%
\begin{equation}
S^{\ast}AS=A_{11}\oplus\cdots\oplus A_{rr}\oplus A_{r+1,r+1}\oplus\cdots\oplus
A_{s+1,s+1}\text{,} \label{alg2}%
\end{equation}
in which each $A_{ii}$ is the same size as $C_{i}$, $\operatorname{dspec}%
A_{ii}^{-\ast}A_{ii}=\{\mu_{i},\bar{\mu}_{i}^{-1}\}$ for $i=1,\ldots,r$, and
$\operatorname{dspec}A_{ii}^{-\ast}A_{ii}=\{\lambda_{i-r}\}$ for
$i=r+1,\ldots,r+s$.

The Type II *congruence blocks are now easy to determine: to each pair of
Jordan blocks $J_{m}(\mu_{i})\oplus J_{m}(\bar{\mu}_{i}^{-1})$ of
$A_{ii}^{-\ast}A_{ii}$ corresponds one Type II *congruence block $H_{2m}%
(\mu_{i})$ of $A$.

Now consider each diagonal block $A_{r+j,r+j}$ in turn. Its *cosquare has a
single eigenvalue $\lambda_{j}=e^{i\phi_{j}}$, $0\leq\phi_{j}<2\pi$. Let the
Jordan Canonical Form of the *cosquare of $e^{-i\phi_{j}/2}A_{r+j,r+j}$ be
$I+J_{N}$, in which $N:=(n_{1},\dots,n_{k})$ and $1\leq n_{1}\leq\cdots\leq
n_{k}$. That *cosquare is similar to $\mathcal{D}_{N}:=\Delta_{N}^{-\ast
}\Delta_{N}$; let $S$ be nonsingular and such that $S\mathcal{D}_{N}%
S^{-1}=e^{-i\phi_{j}}A_{j+r,j+r}^{-\ast}A_{j+r,j+r}$. For notational
convenience, normalize and set $A:=e^{-i\phi_{j}/2}S^{\ast}A_{j+r,j+r}S$.
Then
\[
A^{-\ast}A=e^{-i\phi_{j}}S^{-1}A_{j+r,j+r}^{-\ast}A_{j+r,j+r}S=\mathcal{D}%
_{N}=\Delta_{N}^{-\ast}\Delta_{N},
\]
which implies that
$A=A^{\ast}\Delta_{N}^{-\ast}
\Delta_{N}$ and hence
\begin{equation}
 \label{iyj}
\begin{aligned}
B  &  :=\Delta_{N}^{-1}A=
\Delta_{N}^{-1}(A^{\ast}
\Delta_{N}^{-\ast}\Delta_{N})
=\Delta_{N}^{-1}
(\Delta_{N}^{-1}A)^{\ast
}\Delta_{N}\\
&
=\Delta_{N}^{-1}B^{\ast}\Delta_{N}=\Delta_{N}^{-1}\left(
\Delta_{N}^{\ast
}B\Delta_{N}^{-\ast}\right)  \Delta_{N}=\mathcal{D}_{N}^{-1}B\mathcal{D}%
_{N}\text{.}
\end{aligned}
\end{equation}
Thus, $\mathcal{D}_{N}$
commutes with $B$, so $B$ is
$N$-upper Toeplitz.

Invoking the identity
\[
B^{\ast}=\Delta_{N}B\Delta_{N}^{-1}\text{,}%
\]
already employed in (\ref{iyj}), compute
\[
P_{N}B^{\ast}P_{N}=P_{N}\left(  \Delta_{N}B\Delta_{N}^{-1}\right)
P_{N}=\left(  P_{N}\Delta_{N}\right)  B\Delta_{N}^{-1}P_{N}.
\]
Since $P_{N}\Delta_{N}=I+iJ_{N}$ commutes with any $N$-upper Toeplitz matrix,
it follows that
\begin{equation}
P_{N}B^{\ast}P_{N}=B\left(  P_{N}\Delta_{N}\right)  \Delta_{N}^{-1}%
P_{N}=BP_{N}^{2}=B\text{.} \label{boxstardef}%
\end{equation}

Let $X = [X_{ij}]$ be a given $N$-upper Toeplitz matrix,
partitioned as in (\ref{a21}). The \emph{$N$-block star} of $X$ is
the $N$-upper Toeplitz matrix
\[
X^{\maltese}:=P_{N}X^{\ast }P_{N}.
\]
The upper Toeplitz submatrix of each $i,j$ block of $X^{\maltese}$
is the complex conjugate of the upper Toeplitz submatrix of
$X_{ji}$. We say that $X$ is \emph{$N$-Hermitian} if
$X^{\maltese}=X$, in which case the upper Toeplitz submatrices of
each pair of blocks $X_{ij}$ and $X_{ji}$ are complex conjugates.
If $N=(1,1,\ldots,1)$ then $X^{\maltese}=X^*$ and  $X$ is
$N$-Hermitian if and only if it is Hermitian.

The identity (\ref{boxstardef}) asserts that $B$ is $N$-Hermitian.

If $X$ and $Y$ are $N$-upper Toeplitz, one checks that%
\[
\left(  XY\right)  ^{\maltese}=Y^{\maltese}X^{\maltese}\text{.}%
\]
We say that $N$-upper Toeplitz matrices $X$ and $Y$ are \emph{$^{\maltese\!}%
$congruent} (\emph{$N$-block star congruent}) if there exists a
nonsingular $N$-upper Toeplitz matrix $S$ such that
$S^{\maltese}BS=C$; $^{\maltese\!}$congruence is an equivalence
relation on the set of $N$-upper Toeplitz matrices.

Since $B$ is $N$-upper Toeplitz and $N$-Hermitian, for any
$N$-upper Toeplitz matrix $S$ we have%
\begin{align}
S^{\ast}AS  &  =S^{\ast}\left(  \Delta_{N}B\right)  S=\left(  P_{N}%
S^{\maltese}P_{N}\right)  \Delta_{N}BS=P_{N}S^{\maltese}(P_{N}\Delta
_{N})BS\nonumber\\
&  =P_{N}\left(  P_{N}\Delta_{N}\right)  S^{\maltese}BS=\Delta_{N}\left(
S^{\maltese}BS\right)  \text{.} \label{jrt}%
\end{align}
If we can find a nonsingular $N$-upper Toeplitz $S$ such that
\[
S^{\maltese}BS=\varepsilon_{1}I_{n_{1}}\oplus\cdots\oplus\varepsilon
_{k}I_{n_{k}}\quad\text{with\ }\varepsilon_{i}=\pm1\text{,}%
\]
it follows from \eqref{jrt} that $A$ is *congruent to
\begin{equation}
\varepsilon_{1}\Delta_{n_{1}}\oplus\cdots\oplus\varepsilon_{k}\Delta_{n_{k}}.
\label{signs}%
\end{equation}
Theorem \ref{t2}(b) ensures
that for each $n=1,2,\ldots$
there is a \textit{unique}
set of signs associated with
the blocks $\Delta_{n}$ of
size $n$ in (\ref{signs}).
The following generalization
of Sylvester's Inertia
Theorem provides a way to
construct these signs.

\begin{lemma}
\label{lemHermitian} Let $C$ be nonsingular, $N$-upper Toeplitz,
and {$N$-Hermitian.} { Then there is a nonsingular }$N$-upper
Toeplitz matrix $S$ such that
\begin{equation}
S^{\maltese\!}CS=\varepsilon_{1}I_{n_{1}}\oplus\cdots\oplus\varepsilon
_{k}I_{n_{k}},\qquad\text{each }\varepsilon_{i}\in\{-1,1\}\text{.}
\label{BBdiag}%
\end{equation}

\end{lemma}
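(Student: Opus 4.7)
The plan is to prove Lemma~\ref{lemHermitian} by induction on $k$, the number of components of $N=(n_1,\ldots,n_k)$, mimicking the classical inductive proof of Sylvester's Inertia Theorem but working entirely within the algebra of $N$-upper Toeplitz matrices.

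\emph{Base case} ($k=1$): Here $C$ is an $n_1\times n_1$ upper Toeplitz matrix, and $(n_1)$-Hermiticity forces its Toeplitz coefficients to be real. Writing $C=c(J_{n_1})$ with $c(t)\in\mathbb{R}[t]$ and $c(0)\neq 0$, I would set $\varepsilon_1:=\operatorname{sign}(c(0))$ and observe that the real formal power series $\varepsilon_1/c(t)$ has positive constant term $1/|c(0)|$, hence admits a real-coefficient formal power series square root $g(t)$. Because $J_{n_1}^{n_1}=0$, the matrix $S:=g(J_{n_1})$ is a well-defined nonsingular upper Toeplitz matrix satisfying $S^\maltese=S$ (since $g$ has real coefficients), and $S^\maltese C S=g(J_{n_1})^2 c(J_{n_1})=\varepsilon_1 I_{n_1}$.

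\emph{Inductive step} ($k\geq 2$): The plan is to reduce, via three successive $N$-upper Toeplitz congruences, to a matrix of the form $\varepsilon_1 I_{n_1}\oplus C'$ where $C'$ satisfies the hypotheses for smaller $k$. First, invoke the underline homomorphism established earlier: since $\underline{C^\maltese}=(\underline{C})^*$, the matrix $\underline{C}$ is block-diagonal, Hermitian, and nonsingular. Applying classical Sylvester's Inertia Theorem to each of its blocks yields a nonsingular block-diagonal $T$ with $T^*\underline{C}T=\operatorname{diag}(\varepsilon_1,\ldots,\varepsilon_k)$, each $\varepsilon_i\in\{-1,+1\}$. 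Lift $T$ to the $N$-upper Toeplitz matrix $S_1$ defined by $[S_1]_{ij}:=t_{ij}I_{n_i}$ whenever $n_i=n_j$ and zero otherwise; this is nonsingular because $T$ is. After replacing $C$ by $S_1^\maltese C S_1$, we may assume that each diagonal block $C_{ii}$ is an $(n_i)$-Hermitian nonsingular upper Toeplitz matrix with constant term $\varepsilon_i$.

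Second, apply the base case to $C_{11}$ alone via the block-diagonal $N$-upper Toeplitz congruence $S_2:=S_2^{(1)}\oplus I_{n_2}\oplus\cdots\oplus I_{n_k}$, where $S_2^{(1)}$ is the matrix produced by the base case for $C_{11}$; this reduces $C_{11}$ to $\varepsilon_1 I_{n_1}$. Third, perform a Schur-complement-style elimination: take $S_3:=I+E$ where $E$ has nonzero blocks only in its first block row, with $E_{1j}:=-\varepsilon_1 C_{1j}$ for $j>1$. Each $E_{1j}$ is $N$-upper Toeplitz (a scalar multiple of $C_{1j}$), so $S_3$ is $N$-upper Toeplitz and unipotent. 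A direct block computation — using $N$-Hermiticity in the form $C_{i1}=P_{n_i}C_{1i}^*P_{n_1}$ to cancel the $(i,1)$ and $(1,j)$ cross-terms — yields
\[
S_3^\maltese C S_3=\varepsilon_1 I_{n_1}\oplus C',\qquad C':=\bigl[\,C_{ij}-\varepsilon_1 C_{i1}C_{1j}\,\bigr]_{i,j=2}^{k}.
\]
The Schur complement $C'$ is $N'$-upper Toeplitz (with $N':=(n_2,\ldots,n_k)$) by closure of the algebra under sums and products, $N'$-Hermitian because $\maltese$-congruence preserves this property and $C'$ sits on a single diagonal block, and nonsingular because $S_3$ is. The induction hypothesis applied to $C'$ then completes the proof.

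\emph{Main obstacle}: The principal technical work is the Schur-complement block computation and the subsequent verification that $C'$ inherits all three required properties. Both reduce to elementary algebraic manipulations once one invokes the ring structure of $N$-upper Toeplitz matrices under $\maltese$ and the $\maltese$-respecting homomorphism property of $\underline{\,\cdot\,}$, both of which are already in hand from earlier in the paper.
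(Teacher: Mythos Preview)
Your proof is correct and follows essentially the same inductive scheme as the paper's: normalize $C_{11}$ to $\pm I_{n_1}$ via a real-coefficient square root, clear the first block row and column by a Schur-complement $\maltese$-congruence, and recurse on the remaining $(k-1)$-block matrix. The only noteworthy difference is how you arrange $c_{11}\neq 0$: the paper uses an ad hoc elementary $\maltese$-congruence (adding a suitable multiple of block column $j$ to block column $1$ when $c_{11}=0$), whereas you invoke classical Sylvester on the Hermitian matrix $\underline{C}$ and lift the diagonalizing congruence---a tidy use of the $\underline{\,\cdot\,}$ homomorphism already established, though it imports Sylvester as a black box rather than reproving it in situ.
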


\begin{proof}
Since $C$ is nonsingular, $\underline{C}$ and hence all of the direct summands
in (\ref{lok}) are nonsingular as well.

\medskip\textbf{Step 1.} If $c_{11}\neq0$, proceed to Step 2. If $c_{11}=0$,
then $c_{1j}\neq0$ for some $j\leq r$ since $C_{1}$ is nonsingular. Let
$S_{\theta}=[S_{ij}]_{i,j=1}^{k}$ be the $N$-upper Toeplitz matrix in which
the diagonal blocks are identity matrices and all the other blocks are zero
except for $S_{j1}:=e^{i\theta}I_{n_{1}}$ for a real $\theta$ to be
determined. The $1,1$ entry of $S_{\theta}^{\maltese}CS_{\theta}$ is
\begin{equation}
e^{i\theta}c_{1j}+e^{-i\theta}\bar{c}_{1j}+e^{i\theta}e^{-i\theta}%
c_{jj}=2\operatorname{Re}(e^{i\theta}c_{1j})+c_{jj} \label{new_entry}%
\end{equation}
($c_{j1}=\bar{c}_{1j}$ and $c_{jj}=\bar{c}_{jj}$ since $C^{\maltese}=C$).
Choose any $\theta$ for which (\ref{new_entry}) is nonzero.

\medskip\textbf{Step 2.} We may now assume that $c_{11}$ is a nonzero real
number. Let $a:=|c_{11}|^{-1/2}$, so $a^{2}c_{11}=\pm1$. Define the real
$N$-upper Toeplitz matrix $S=aI$ and form $S^{\maltese}CS=SCS$, whose $1,1$
entry is $\pm1$.

\medskip\textbf{Step 3. }We may now assume that $\operatorname{dspec}%
C_{11}=\{c_{11}\}=\{\pm1\}$. Then $\operatorname{dspec}(c_{11}C_{11}%
^{-1})=\{1\}$, so there is a polynomial $p(t)$ with real coefficients such
that $p(C_{11})^{2}=c_{11}C_{11}^{-1}$ \cite[Theorem 6.4.14]{horn},
$p(C_{11})$ is upper Toeplitz and commutes with $C_{11}$, and $p(C_{11}%
)C_{11}p(C_{11})=c_{11}I=\pm I_{n_{1}}$. Define the real $N$-upper Toeplitz
matrix $S=p(C_{11})\oplus I_{n_{2}}\oplus\cdots\oplus I_{n_{k}}$ and form
$S^{\maltese}CS$, whose $1,1$ block is $\pm I_{n_{1}}$.

\medskip\textbf{Step 4.} We may now assume that $C_{11}=\pm I_{n_{1}}$. Define
the $N$-upper Toeplitz matrix%
\[
S=%
\begin{bmatrix}
I_{n_{1}} & -c_{11}C_{12} & \dots & -c_{11}C_{1n_{k}}\\
& I_{n_{2}} & \dots & 0\\
&  & \ddots & \vdots\\
0 &  &  & I_{n_{k}}%
\end{bmatrix}
\]
and form $S^{\maltese}CS$; its $1,j$ and $j,1$ blocks are zero for all
$j=2,\ldots,n_{k}$.

The preceding four steps reduce $C$ by $^{\maltese}$congruence to the form
$\pm I_{n_{1}}\oplus C^{\prime}$. Now reduce $C^{\prime}$ in the same way and
continue. After $k$ iterations of this process we obtain a real diagonal
matrix
\begin{equation}
\varepsilon_{1}I_{n_{1}}\oplus\cdots\oplus\varepsilon_{k}I_{n_{k}}%
,\quad\varepsilon_{i}\in\{-1,1\} \label{signs2}%
\end{equation}
that is $^{\maltese}$congruent to the original matrix $C$.
\end{proof}

\medskip Thus, to determine the *congruence canonical form of a given square
complex matrix $A$, one may proceed as follows:

\begin{enumerate}
\item Apply the regularization algorithm \cite{hor-ser1} to determine the
singular *congruence blocks and the regular part. This reduces the problem to
consideration of a nonsingular $A$.

\item Let $S$ be any nonsingular matrix that gives a similarity between the
*cosquare of $A$ and the direct sum in (\ref{alg1}) and calculate $S^{\ast}%
AS$, which has the block diagonal form (\ref{alg2}). Consider each of these
diagonal blocks in turn.

\item Determine the Type II *congruence blocks by examining the Jordan
Canonical Forms of the diagonal blocks whose *cosquare has a two-point
spectrum: one Type II block $H_{2m}(\mu_{i})$ corresponds to each pair
$J_{m}(\mu_{i})\oplus J_{m}(\bar{\mu}_{i}^{-1})$ in the Jordan Canonical Form
of $A_{ii}^{-\ast}A_{ii}$.

\item For each diagonal block $A_{jj}$ whose *cosquare has a one-point
spectrum (suppose it is $e^{i\phi}$), consider $\hat{A}_{jj}=e^{-i\phi
/2}A_{jj}$. Find an $S$ such that $\hat{A}_{jj}^{-\ast}\hat{A}_{jj}%
=S^{-1}(\Delta_{N}^{-1}\Delta_{N})S$ and consider
$B=\Delta_{N}^{-1}S^{\ast }\hat{A}_{jj}S$, which is $N$-upper
Toeplitz and {$N$-Hermitian.}

\item Use Lemma \ref{lemHermitian} (or some other means) to reduce $B$ by
$^{\maltese\!}$congruence to a diagonal form (\ref{signs2}). Then $\hat
{A}_{jj}$ is *congruent to a direct sum of the form (\ref{signs}) and the
diagonal block $A_{jj}$ corresponds to a direct sum $\varepsilon_{1}%
e^{i\phi/2}\Delta_{n_{1}}\oplus\cdots\oplus\varepsilon_{k}e^{i\phi/2}%
\Delta_{n_{k}}$ of Type I blocks.
\end{enumerate}

\section{Some special *congruences\label{Special}}

A square complex matrix $A$ is diagonalizable by *congruence if and only if
the *congruence canonical form of $A$ contains only 1-by-1 blocks (which can
only be Type 0 blocks $J_{1}(0)=[0]$ and Type I blocks $\lambda\Delta
_{1}=[\lambda]$ with $|\lambda|=1$). The Type 0 *congruence canonical blocks
for $A$ are all 1-by-1 if and only if $A$ and $A^{\ast}$ have the same null
space; an equivalent condition is that there is a unitary $U$ such that
\begin{equation}
A=U^{\ast}(B\oplus0_{k})U\text{ and }B\text{ is nonsingular.}
\label{normalnullspace}%
\end{equation}
The *congruence class of the regular part $B$ is uniquely
determined, so the similarity class of its *cosquare $B^{-\ast}B$
is also uniquely determined. There are no Type II blocks for $A$
and its Type I blocks are all 1-by-1 if and only if the *cosquare
of its regular part is diagonalizable and has only eigenvalues
with unit modulus. Thus, $A$ \textit{is diagonalizable by
*congruence if and only if both of the following conditions are
satisfied: (a) }$A$\textit{ and }$A^{\ast}$\textit{ have the same
null space, and (b) the *cosquare of the regular part of $A$ is
diagonalizable and all its eigenvalues have unit modulus.}

Let $A$ be nonsingular and suppose that its *cosquare $\mathcal{A}$ is
diagonalizable and all its eigenvalues have unit modulus. Let $S$ be any
nonsingular matrix that diagonalizes $\mathcal{A}$, and consider the forms
that (\ref{alg1}) and (\ref{alg2}) take in this case:%
\[
A^{-\ast}A=S\left(  C_{1}\oplus\cdots\oplus C_{q}\right)  S^{-1}%
\]
and%
\[
S^{\ast}AS=E_{1}\oplus\cdots\oplus E_{q},%
\]
in which $C_{j}=e^{2i\theta_{j}}I$ for each $j=1,...,q$, $0\leq\theta
_{1}<\cdots<\theta_{q}<\pi$, each $E_{j}$ is the same size as $C_{j}$, and
each $E_{j}^{-\ast}E_{j}=e^{2i\theta_{j}}I$. We have $e^{-i\theta_{j}}%
E_{j}=e^{i\theta_{j}}E_{j}^{\ast}=(e^{-i\theta_{j}}E_{j})^{\ast}$, so each
matrix $e^{-i\theta_{j}}E_{j}$ is Hermitian. Sylvester's Inertia Theorem
ensures that $e^{-i\theta_{j}}E_{j}$ is *congruent to $I_{n_{j}^{+}}%
\oplus(-I_{n_{j}^{-}})$ for nonnegative integers $n_{j}^{+}$ and $n_{j}^{-}$
that are determined uniquely by (the *congruence class of) $e^{-i\theta_{j}%
}E_{j}$. It follows that each $E_{j}$ is *congruent to
\[
e^{i\theta_{j}}I_{n_{j}^{+}}\oplus(-e^{i\theta_{j}}I_{n_{j}^{-}}%
)=e^{i\theta_{j}}I_{n_{j}^{+}}\oplus(e^{i(\theta_{j}+\pi)}I_{n_{j}^{-}%
})\text{,}%
\]
so $A$ is *congruent to the uniquely determined canonical form%
\[
\left(  e^{i\theta_{1}}I_{n_{1}^{+}}\oplus(e^{i(\theta_{1}+\pi)}I_{n_{1}^{-}%
})\right)  \oplus\cdots\oplus\left(  e^{i\theta_{q}}I_{n_{q}^{+}}%
\oplus(e^{i(\theta_{q}+\pi)}I_{n_{q}^{-}})\right)  \text{,\ }%
\]
$0\leq\theta_{1}<\cdots<\theta_{q}<\pi$.
The angles $\theta_{j}$ for
which $n_{j}^{+}\geq1$
together with the angles
$\theta_{j}+\pi$ for which
$n_{j}^{-}\geq1$ (all
$\theta_{j}\in\lbrack0,\pi)$,
$j=1,\dots,q$) are the
canonical angles of order one
of $A$; the corresponding
integers $n_{j}^{+}$ and
$n_{j}^{-}$ are their
respective multiplicities.

We have just described how to determine the signs
$\varepsilon_{j}$ that occur in (\ref{signs}) when
$N=(1,\ldots,1)$; in this special case, every $|N|$-by-$|N|$
matrix is $N$-upper Toeplitz, and $N$-Hermitian matrices are just
ordinary Hermitian matrices. \textit{Two square complex matrices
of the same size that are diagonalizable by *congruence are
*congruent if and only if they have the same canonical angles of
order one with the same multiplicities.}

This observation is a special case of a more general fact: over the reals or
complexes, each system of forms and linear mappings decomposes uniquely into a
direct sum of indecomposables, up to isomorphism of summands \cite[Theorem
2]{ser1}. This special case was rediscovered in \cite{JF}, which established
uniqueness of the canonical angles and their multiplicities but did not
determine them (the signs $\varepsilon_{j}$ remained ambiguous) except in
special circumstances, e.g., if the field of values of $e^{i\phi}A$ lies in
the open right half plane for some $\phi\in\lbrack0,2\pi)$. Using
(\ref{normalnullspace}) to introduce generalized inverses and a natural
generalized *cosquare, \cite{robinson} later gave an alternative approach that
fully determined the canonical angles and their multiplicities for a complex
matrix that is diagonalizable by *congruence.

If $A$ is normal, it is unitarily *congruent to $\Lambda\oplus0_{k}$, in which
$\Lambda$ is a diagonal matrix whose diagonal entries are the nonzero
eigenvalues of $A$ (including multiplicities). The canonical angles (of order
one; no higher orders occur) of a normal matrix are just the principal values
of the arguments of its nonzero eigenvalues; the multiplicity of a canonical
angle is the number of eigenvalues on the open ray that it determines. Thus,
\textit{two normal complex matrices of the same size are *congruent if and
only if they have the same number of eigenvalues on each open ray from the
origin.} This special case was completely analyzed in \cite{Ikramov}. If $A$
is Hermitian, of course, its nonzero eigenvalues are on only two open rays
from the origin: the positive half-line and the negative half-line. For
Hermitian matrices, the criterion for *congruence of normal matrices is just
Sylvester's Inertia Theorem.

Finally, suppose that the *congruence canonical form of a nonsingular
$n$-by-$n$ complex matrix $A$ has only one block; the singular case is
analyzed in \cite{hor-ser1}. If $n$ is odd, that block must be a Type I block
$\lambda\Delta_{n}$ (with $|\lambda|=1$). If $n=2m$, it can be either a Type I
block or a Type II block $H_{2m}(\mu)$ (with $|\mu|>1$). Of course, the latter
case occurs if and only if the *cosquare of $A$ is similar to $J_{m}%
(\mu)\oplus J_{m}(\bar{\mu}^{-1})$ and $|\mu|>1$.

Suppose the *cosquare of $A$
is similar to
$J_{n}(\lambda^{2})$ with
$\left\vert
\lambda\right\vert =1$. Then
the *congruence canonical
form of $A$ is
$\varepsilon\lambda\Delta_{n}$
and $\varepsilon\in\{1,-1\}$
can be determined as follows.
Let $\hat {A}=\lambda^{-1}A$,
let $S$ be such that
$\Delta_{n}^{-1}\Delta_{n}%
=S^{-1}(\hat{A}^{-\ast}\hat{A})S$, and let $M:=S^{\ast}\hat{A}
S\Delta_{n}^{-1}$. Lemma \ref{Lemma*congrence}(a) tells us that $M$
is similar to a real matrix. Because $\hat{A}$ is indecomposable
under *congruence, in Lemma \ref{Lemma*congrence}(b) either $k=n$ or
$k=0$:
\begin{equation}\label{mdj}
\varepsilon=
  \begin{cases}
    -1 & \text{if all the
eigenvalues of $M$ are
negative} \\
    1 & \text{if no eigenvalue of $M$ is
negative}.
  \end{cases}
\end{equation}

Alternatively, we can employ the algorithm described in Section
\ref{Alternative}. Examine $B:=\Delta_{n}^{-1}S^{\ast}\hat{A}S$, which must be
nonsingular, upper Toeplitz, and real, so
\[
B=%
\begin{bmatrix}
b_{11} & b_{11}^{(2)} & \dots & b_{11}^{(n)}\\
& b_{11} & \ddots & \vdots\\
&  & \ddots & b_{11}^{(2)}\\
0 &  &  & b_{11}%
\end{bmatrix}
\text{,\quad}b_{11}\neq0\text{.}%
\]
The reduction described in Lemma \ref{lemHermitian} is trivial in this case,
and it tells us that $\varepsilon$ is the sign of $b_{11}$.

\begin{example}\label{ex1}
\rm
Consider%
\[
A=\left[
\begin{array}
[c]{cc}%
1 & 2\\
0 & 1
\end{array}
\right]  \text{,}%
\]
whose *cosquare
\[
A^{-\ast}A=\left[
\begin{array}
[c]{cc}%
1 & 2\\
-2 & -3
\end{array}
\right]
\]
is not diagonal, has $-1$ as a double eigenvalue, and hence is similar to
$J_{2}(-1)$. Thus, $A$ is *congruent to $\varepsilon i\Delta_{2}$ with
$\varepsilon=\pm1$.  Let $\hat{A}=-iA$ and verify that%
\[
\left[
\begin{array}
[c]{cc}%
1 & 2i\\
0 & 1
\end{array}
\right]  =\Delta_{2}^{-\ast}\Delta_{2}=S^{-1}(\hat{A}^{-\ast}\hat{A})S\text{
\quad for }S=\left[
\begin{array}
[c]{cc}%
-1 & 0\\
1 & i
\end{array}
\right]  \text{.}%
\]
Then both eigenvalues of $M=S^{\ast}\hat{A}S\Delta_{2}^{-1}=-I_{2}$
are negative, so \eqref{mdj} ensures that $\varepsilon=-1$ and $A$
is
*congruent to $-i\Delta_{2}$. Alternatively,
$B=\Delta_{n}^{-1}S^{\ast}\hat{A}S=-I_{2}$, so the sign of $b_{11}$
is negative and $\varepsilon=-1$.
\end{example}

\begin{example}\label{ex2}
\rm Consider%
\begin{equation}
A=\left[
\begin{array}
[c]{cc}%
0 & 1\\
-1 & 1
\end{array}
\right],  \label{CP-1}%
\end{equation}
whose *cosquare%
\[
A^{-\ast}A=\left[
\begin{array}
[c]{cc}%
-1 & 2\\
0 & -1
\end{array}
\right]
\]
is similar to $J_{2}(-1)$. Thus, $A$ is *congruent to $\varepsilon i\Delta
_{2}$. Let  $\hat{A}=-iA$ and verify that%
\[
\left[
\begin{array}
[c]{cc}%
1 & 2i\\
0 & 1
\end{array}
\right]  =\Delta_{2}^{-\ast}\Delta_{2}=S^{-1}(\hat{A}^{-\ast}\hat{A})S\text{
\quad for }S=\left[
\begin{array}
[c]{cc}%
1 & 0\\
0 & -i
\end{array}
\right]  \text{.}%
\]
Then both eigenvalues of $M=S^{\ast}\hat{A}S\Delta_{2}^{-1}=-I_{2}$
are negative, so \eqref{mdj} ensures that $\varepsilon=-1$ and $A$
is
*congruent to $-i\Delta_{2}$.
\end{example}

\begin{example}\label{ex3}
\rm Suppose $\left\vert \lambda\right\vert =1$ but
$\lambda^{2}\neq-1$. Let $a$ denote the real part of $\lambda$
and consider%
\begin{equation*}
A=\left[
\begin{array}
[c]{cc}%
0 & {\lambda}/a\\
{\lambda}/a & i
\end{array}
\right]=\frac{\lambda}a\left[
\begin{array}
[c]{cc}%
0 & 1\\
1 & a\bar{\lambda}i
\end{array}
\right],
\end{equation*}
whose *cosquare%
\[
A^{-\ast}A= \lambda^{2} \left[
\begin{array}
[c]{cc}%
a{\lambda}i & 1\\
1 & 0
\end{array}
\right]\left[
\begin{array}
[c]{cc}%
0 & 1\\
1 & a\bar{\lambda}i
\end{array}
\right]= \lambda^{2}\left[
\begin{array}
[c]{cc}%
1 & 2a^2i\\
0 & 1
\end{array}
\right]
\]
is similar to
$J_{2}(\lambda^{2})$. Thus,
$A$ is *congruent to
$\varepsilon
\lambda\Delta_{2}$. Let
$\hat{A}=\lambda^{-1}A$
and verify that%
\[
\left[
\begin{array}
[c]{cc}%
1 & 2i\\
0 & 1
\end{array}
\right]
=\Delta_{2}^{-\ast}\Delta_{2}=S^{-1}(\hat{A}^{-\ast}\hat{A})S\text{
\quad for }S:=\left[
\begin{array}
[c]{cc}%
1 & 0\\
0 & {1}/a^{2}
\end{array}
\right].
\]
Then both eigenvalues of
\begin{align*}
M
=S^{\ast}\hat{A}S\Delta_{2}^{-1}
=\left[
\begin{array}
[c]{cc}%
0 & 1/a^{3}\\
1/a^{3} &
{\bar{\lambda}i}/a^{4}
\end{array}
\right]  \left[
\begin{array}
[c]{cc}%
-i & 1\\
1 & 0
\end{array}
\right]   =\left[
\begin{array}
[c]{cc}%
{1}/a^{3}& 0\\
\star & {1}/a^{3}
\end{array}
\right]
\end{align*}
have the same sign as $a$. Thus, \eqref{mdj} ensures that $A$ is
*congruent to
$\lambda\Delta_{2}$ if
$\operatorname{Re}\lambda>0$
and to $-\lambda\Delta _{2}$
if
$\operatorname{Re}\lambda<0$.
\end{example}

\section{Canonical pairs}

\label{sl6}

We now explain how to use the canonical matrices in Theorem \ref{t2}
to obtain the canonical pairs described in Theorem \ref{ths}.

\begin{proof}[Proof of Theorem
\ref{ths}(a)] Each square
matrix $A$ can be expressed
uniquely as the sum of a
symmetric and a
skew-symmetric matrix:
\begin{equation}
A=\mathcal{S}(A)+\mathcal{C}(A),\qquad\mathcal{S}(A):=\tfrac{1}{2}\left(
A+A^{T}\right)  ,\quad\mathcal{C}(A):=\tfrac{1}{2}\left(  A-A^{T}\right)
\text{.} \label{pair16}%
\end{equation}
Since $\mathcal{S}(R^{T}AR)=R^{T}\mathcal{S}(A)R$ and $\mathcal{C}%
(R^{T}AR)=R^{T}\mathcal{C}(A)R$, any congruence that reduces $A$ to a direct
sum%
\[
R^{T}AR=B_{1}\oplus\cdots\oplus B_{k}%
\]
gives a simultaneous congruence of $\left(  \mathcal{S}(A),\mathcal{C}%
(A)\right)  $ that reduces it to a direct sum of pairs
\[
\left(  \mathcal{S}(B_{1}),\mathcal{C}(B_{1})\right)  \oplus\cdots
\oplus\left(  \mathcal{S}(B_{k}),\mathcal{C}(B_{k})\right)  \text{.}%
\]
Theorem \ref{t2}(a) ensures that
$A$ is congruent to a direct
sum of blocks of
the three types
\begin{equation}
2J_{n}(0),\quad
\Gamma_{n},\quad
2H_{2n}(\mu),\label{pair33}%
\end{equation}
in which $0\neq\mu\neq(-1)^{n+1}$ and $\mu$ is determined up to
replacement by $\mu^{-1}$, and that such a decomposition is unique
up to permutation of the direct summands.

Computing the symmetric and skew-symmetric parts of the blocks
(\ref{pair33}) produces the indicated Type 0, Type I, and Type II
canonical pairs in \eqref{table3}.

It remains to prove that the two alternative pairs in \eqref{table4}
may be used instead of the Type II pair
\begin{equation}\label{kne}
\left(  \lbrack
J_{n}(\mu+1)\diagdown
J_{n}(\mu+1)^{T}],\:[J_{n}(\mu
-1)\diagdown-J_{n}(\mu-1)^{T}]\right).
\end{equation}

First suppose that $\mu=-1$, so $n$ is odd (since
$\mu\neq(-1)^{n+1}$) and we have the Type II pair
\begin{equation*}
\left(  \lbrack J_{n}(0)\,
\diagdown\,J_{n}(0)^{T}],\:
[J_{n}(-2)\,\diagdown
\,-J_{n}(-2)^{T}]\right)  \text{.} \label{pair30}%
\end{equation*}
A simultaneous congruence
of this pair via%
\[
\left[
\begin{array}
[c]{cc}%
I_{n} & 0\\
0 & J_{n}(-2)^{-T}%
\end{array}
\right]
\]
transforms it to the pair%
\begin{equation}
\left(  \left[
\begin{array}
[c]{cc}%
0 & J_{n}(0)^{T}J_{n}(-2)^{-T}\\
J_{n}(-2)^{-1}J_{n}(0) & 0
\end{array}
\right]  ,\ \ \left[
\begin{array}
[c]{cc}%
0 & -I_{n}\\
I_{n} & 0
\end{array}
\right]  \right)  \text{.} \label{pair31}%
\end{equation}
Since
$J_{n}(-2)^{-1}J_{n}(0)$ is
similar to $J_{n}(0)$, there
is a nonsingular matrix $S$
such that
$S^{-1}J_{n}(0)S=J_{n}
(-2)^{-1}J_{n}(0)$. Then a simultaneous congruence of (\ref{pair31}) via%
\[
\left[
\begin{array}
[c]{cc}%
S^{-1} & 0\\
0 & S^{T}%
\end{array}
\right]
\]
transforms it to the second of the two alternative pairs in
\eqref{table4}.

Now suppose that $\mu\neq-1$. Let $S$ be a nonsingular matrix such
that
\begin{equation}
S^{-1}J_{n}(\mu-1)J_{n}(\mu+1)^{-1}S=J_{n}\left(  \nu\right)  ,\quad\nu
:=\frac{\mu-1}{\mu+1}\,\text{.} \label{pair21}%
\end{equation}
A simultaneous congruence of
the pair \eqref{kne}
via%
\[%
\begin{bmatrix}
J_{n}(\mu+1)^{-1}S & 0\\
0 & S^{-T}%
\end{bmatrix}
\]
transforms it to the first alternative pair in \eqref{table4}. The
definition (\ref{pair21}) ensures that $\nu\neq1$; $\nu\neq-1$ since
$\mu\neq0$; and $\nu\neq0$ if $n$ is odd since $\mu\neq(-1)^{n+1}$.
Because $\mu$ is determined up to replacement by $\mu^{-1}$, $\nu$
is determined up to replacement by
\[
\frac{\mu^{-1}-1}{\mu^{-1}+1}=\frac{1-\mu}{1+\mu}=-\nu.
\]
\end{proof}

\begin{proof}[Proof of Theorem
\ref{ths}(b)] Each square
complex matrix has a
\textit{Cartesian
decomposition}
\begin{equation}
A=\mathcal{H}(A)+i\mathcal{K}(A),\qquad\mathcal{H}(A):=\tfrac{1}{2}\left(
A+A^{\ast}\right)  ,\quad\mathcal{K}(A):=\tfrac{i}{2}\left(  -A+A^{\ast
}\right)  \label{pair6}%
\end{equation}
in which both $\mathcal{H}(A)$ and $\mathcal{K}(A)$ are Hermitian. Moreover,
if $\mathcal{H}^{\prime}$ and $\mathcal{K}^{\prime}$ are Hermitian matrices
such that $A=\mathcal{H}^{\prime}+i\mathcal{K}^{\prime}$, then $\mathcal{H}%
^{\prime}=\mathcal{H}(A)$ and $\mathcal{K}^{\prime}=\mathcal{K}(A)$. Since
$\mathcal{H}(R^{\ast}AR)=R^{\ast}\mathcal{H}(A)R$ and $\mathcal{K}(R^{\ast
}AR)=R^{\ast}\mathcal{K}(A)R$, any *congruence that reduces $A$ to a direct
sum%
\[
R^{\ast}AR=B_{1}\oplus\cdots\oplus B_{k}%
\]
gives a simultaneous *congruence of $\left(  \mathcal{H}(A),\mathcal{K}%
(A)\right)  $ that reduces it to a direct sum of pairs
\[
\left(  \mathcal{H}(B_{1}),\mathcal{K}(B_{1})\right)  \oplus\cdots
\oplus\left(  \mathcal{H}(B_{k}),\mathcal{K}(B_{k})\right)  \text{.}%
\]
Theorem \ref{t2}(b) ensures that $A$ is congruent to a direct sum of blocks of
the three types
\begin{equation}
2J_{n}(0),\
\lambda\Delta_{n},\text{ and
}H_{2n}(\mu),\text{ in which
}\left\vert
\lambda\right\vert =1\text{ and }\left\vert \mu\right\vert >1 \label{pair35}%
\end{equation}
and that such a decomposition
is unique up to permutation
of the direct summands. The
Cartesian decomposition of
$2J_{n}(0)$ produces the Type
0 pair in \eqref{table5}.

Consider the Type I block $\lambda\Delta_{n}$ with $|\lambda|=1$. If a matrix
$F_{n}$ is nonsingular and $F_{n}^{-\ast}F_{n}$ is similar to $J_{n}%
(\lambda^{2})$, then
$\lambda\Delta_{n}$ is
*congruent to $\pm F_{n}$.
Suppose $\lambda^{2}\neq-1$.
Then
\[
c:=i\frac{1-\lambda^{2}}
{1+\lambda^{2}}=i\frac{\bar{\lambda}(1-\lambda^{2})}
{\bar{\lambda}(1+\lambda^{2})}=
i\frac{\bar{\lambda}-\lambda}{\bar{\lambda}+\lambda}=\frac{\operatorname{Im}
\lambda}{\operatorname{Re} \lambda}
\]
is real and
\[
\lambda^{2}=\frac{{1+ic}}{{1-ic}}\text{.}%
\]
Consider the symmetric matrix
\begin{equation}\label{kts}
F_{n}:=\Delta_{n}(1+ic,i)
=P_{n}\left(
I_{n}+iJ_{n}(c)\right) .
\end{equation}
Then
\[
F_{n}^{-\ast}=\overline{F_{n}^{-1}}=\left(  I_{n}-iJ_{n}(c)\right)  ^{-1}P_{n}%
\]
and
\begin{align*}
F_{n}^{-\ast}F_{n} &  =\left(  I_{n}-iJ_{n}(c)\right)  ^{-1}P_{n}P_{n}\left(
I_{n}+iJ_{n}(c)\right)  \\
&  =\left(  \left(  1-ic\right)  I_{n}-iJ_{n}(0)\right)  ^{-1}\left(  \left(
1+ic\right)  I_{n}+iJ_{n}(0)\right)  \\
&  =\lambda^{2}\left(  I_{n}+a_{1}J_{n}(0)+a_{2}J_{n}(0)^{2}+a_{3}J_{n}%
(0)^{3}+\cdots\right)  \text{,}%
\end{align*}
in which $a_{1}=2i\left(
1+c^{2}\right)  ^{-1}\neq0$.
Thus, $F_{n}^{-\ast }F_{n}$
is similar to
$J_{n}(\lambda^{2})$, so
$\lambda\Delta_{n}$ is
*congruent to
$\pm\Delta_{n}(1+ic,i)$. The
Cartesian decomposition of
$\pm\Delta_{n}(1+ic,i)$
produces the first Type I
pair in \eqref{table5}.

Now suppose that $\lambda^{2}=-1$ and consider the real matrix
\begin{equation}
\label{1aaa}G_{n}:=%
\begin{cases}%
\begin{bmatrix}
0 &  &  &  &  & 1\\
&  &  &  &
\ddd & 1\\
&  &  & 1 & \ddd & \\
&  & -1 & 1 &  & \\
& \ddd &
\ddd &  &  & \\
-1 & 1 &  &  &  & 0
\end{bmatrix}
\!\!\!%
\begin{matrix}
\left.  \rule{0pt}{23pt}\right\}  m\\[17pt]%
\left.  \rule{0pt}{23pt}\right\}  m
\end{matrix}
& \text{if $n=2m$},\\[50pt]
\begin{bmatrix}
0 &  &  &  &  &  & 1\\
&  &  &  &  & \ddd & 1\\
&  &  &  & 1 & \ddd & \\
&  &  & \fbox{1} & 1 &  & \\
&  & 1 & 0 &  &  & \\
& \ddd &
\ddd &  &  &  & \\
1 & 0 &  &  &  &  & 0
\end{bmatrix}
\!\!\!%
\begin{matrix}
\left.  \rule{0pt}{23pt}\right\}  m\\[17pt]%
\left.  \!\!\!\right\}
1\\[5pt]
\left.  \rule{0pt}{23pt}\right\}  m
\end{matrix}
& \text{if $n=2m+1$}%
\end{cases}
\end{equation}
(the boxed unit is at the
center). Since
$G_{n}^{-T}G_{n}$ is similar
to $J_{n}(-1)$,
$\lambda\Delta_{n}$ is
*congruent to $\pm G_{n}$.
The Cartesian decomposition
of $\pm G_{n}$ produces the
second Type I pair in
\eqref{table5}.

The Type II block $H_{2n}(\mu)$ with $\left\vert \mu\right\vert >1$ is
*congruent to $H_{2n}(\bar{\mu}^{-1})$ because their *cosquares are both
similar to $J_{n}(\mu)\oplus J_{n}(\bar{\mu}^{-1})$. Represent $\bar{\mu}%
^{-1}$, a point in the open unit disk with the origin is omitted, as%
\[
\bar{\mu}^{-1}=\frac{1+i\nu}{1-i\nu},
\]
in which $\nu$ is in the open upper half plane with the point $i$ omitted. In
fact,%
\begin{equation}
\nu=\frac{2\operatorname{Im}\mu+i\left(  \left\vert \mu\right\vert
^{2}-1\right)  }{\left\vert \mu+1\right\vert ^{2}}:=a+ib\neq i,\quad b>0,\quad
a,b\in\mathbb{R}\text{.}\label{pair40}%
\end{equation}
We have%
\begin{align*}
\left(  I_{n}+iJ_{n}(\nu)\right)  \left(  I_{n}-iJ_{n}(\nu)\right)  ^{-1} &
=\left(  \left(  1+i\nu\right)  I_{n}+iJ_{n}(0)\right)  \left(  \left(
1-i\nu\right)  I_{n}-iJ_{n}(0)\right)  ^{-1}\\
&  =\bar{\mu}^{-1}\left(  I_{n}+a_{1}J_{n}(0)+a_{2}J_{n}(0)^{2}+a_{3}%
J_{n}(0)^{3}+\cdots\right),
\end{align*}
in which $a_{1}=2i\left(  1+\nu^{2}\right)  ^{-1}\neq0$. Thus,
\[
\left(  I_{n}+iJ_{n}(\nu)\right)  \left(  I_{n}-iJ_{n}(\nu)\right)  ^{-1}%
\]
is similar to $J_{n}(\bar{\mu}^{-1})$. Let $S$ be a nonsingular matrix such
that
\[
S^{-1}J_{n}(\bar{\mu}^{-1})S=\left(  I_{n}+iJ_{n}(\nu)\right)  \left(
I_{n}-iJ_{n}(\nu)\right)  ^{-1}%
\]
and compute the following *congruence of $H_{2n}(\bar{\mu}^{-1})$:
\begin{multline*}%
\begin{bmatrix}
S\left(  I_{n}-iJ_{n}(\nu)\right)   & 0\\
0 & S^{-\ast}%
\end{bmatrix}
^{\ast}%
\begin{bmatrix}
0 & I_{n}\\
J_{n}(\bar{\mu}^{-1}) & 0
\end{bmatrix}%
\begin{bmatrix}
S\left(  I_{n}-iJ_{n}(\nu)\right)   & 0\\
0 & S^{-\ast}%
\end{bmatrix}
\\
=%
\begin{bmatrix}
0 & I_{n}+iJ_{n}(\nu)^{\ast}\\
I_{n}+iJ_{n}(\nu) & 0
\end{bmatrix}
\text{.}%
\end{multline*}
The Cartesian decomposition
of this matrix produces the
Type II pair in
\eqref{table5} with the
parameters $a$ and $b$
defined in (\ref{pair40}).
\end{proof}

\medskip For a given $\lambda$ with $\left\vert \lambda\right\vert =1$, the
$\pm$ signs associated with
the Type I canonical pairs in
\eqref{table5} can be
determined using the
algorithms in either Section
3 or Section 4.

For example, suppose $n=2$ and $\lambda=i$. The matrix%
\[
G_{2}=\left[
\begin{array}
[c]{cc}%
0 & 1\\
-1 & 1
\end{array}
\right]
\]
defined in \eqref{1aaa} was
analyzed in Example
\ref{ex2}. We found that
$i\Delta_{2}$ is *congruent
to $-G_{2}$, so the
*congruence canonical pair
associated with $i\Delta_{2}$
is
\[
-\left(  \Delta_{2}(0,1),\Delta_{2}(1,0)\right)  =-\left(  \left[
\begin{array}
[c]{cc}%
0 & 0\\
0 & 1
\end{array}
\right]  ,\left[
\begin{array}
[c]{cc}%
0 & 1\\
1 & 0
\end{array}
\right]  \right)  \text{.}%
\]

As a second example, suppose $n=2$ and  $\left\vert
\lambda\right\vert =1$, but $\lambda^{2}\neq-1$. Let $\lambda=a+ib$
($a,b\in\mathbb R$).
The matrix%
\[
F_{2}=\Delta_{2}(1+ib/a,i)
=\Delta_{2}({\lambda}/a,i)=\left[
\begin{array}
[c]{cc}%
0 & {\lambda}/a\\
{\lambda}/a & i
\end{array}
\right]
\]
defined in \eqref{kts}, was analyzed in Example \ref{ex3}. We found
that $\lambda\Delta_{2}$ is
*congruent to $F_{2}$ if
$a>0$, and to $-F_{2}$ if
$a<0$. Thus, the *congruence
canonical pair associated
with $\lambda\Delta_{2}$ is%
\[
\left( \Delta_{2}(1,0),\:
\Delta_{2}\left( b/a,1\right)
\right)  =\left(  \left[
\begin{array}
[c]{cc}%
0 & 1\\
1 & 0
\end{array}
\right],\;\left[
\begin{array}
[c]{cc}%
0 & b/a\\
b/a & 1
\end{array}
\right]  \right)
\]
if $\operatorname{Re}\lambda>0$, and is%
\[
-\left(  \left[
\begin{array}
[c]{cc}%
0 & 1\\
1 & 0
\end{array}
\right]  ,\left[
\begin{array}
[c]{cc}%
0 & b/a\\
b/a & 1
\end{array}
\right]  \right)
\]
if
$\operatorname{Re}\lambda<0$.


\begin{thebibliography}{99}                                                                                               %
\bibitem {bal}C. S. Ballantine, Cosquares: complex and otherwise, \emph{Linear
and Multilinear Algebra} 6 (1978) 201--217.

\bibitem {cor}B. Corbas and G. D. Williams, Bilinear forms over an
algebraically closed field, \textit{J. Pure Appl. Algebra} 165 (3) (2001) 225--266.

\bibitem {gab}P. Gabriel, Appendix: degenerate bilinear forms, \emph{J.
Algebra} 31 (1974) 67--72.

\bibitem {gan}F. R. Gantmacher, \textit{The Theory of Matrices}, vol. I ,
Chelsea, New York, 2000.

\bibitem {HJ1}R. A. Horn and C. R. Johnson, \emph{Matrix Analysis}, Cambridge
University Press, New York, 1985.

\bibitem {horn}R. A. Horn and C. R. Johnson, \textit{Topics in Matrix
Analysis}, Cambridge University Press, New York, 1991.

\bibitem {HP}R. A. Horn and G. Piepmeyer, Two applications of the theory of
primary matrix functions, \emph{Linear Algebra Appl.} 361 (2003) 99-106.

\bibitem {hor-ser}R. A. Horn and V. V. Sergeichuk, Congruence of a square
matrix and its transpose, \textit{Linear Algebra Appl.} 389 (2004) 347--353.

\bibitem {hor-ser1}R. A. Horn and V. V. Sergeichuk, A regularizing algorithm
for matrices of bilinear and sesquilinear forms, \emph{Linear Algebra Appl.} 412 (2006) 380--395.

\bibitem {Hua}L. K. Hua, On the theory of automorphic functions of a matrix
variable II---The classification of hypercircles under the symplectic group,
\emph{Amer. J. Math.} 66 (1944) 531-563.

\bibitem {Ikramov}Kh. D. Ikramov, On the inertia law for normal matrices,
\emph{Doklady Math.} 64 (2001) 141--142.

\bibitem {JF}C. R. Johnson and S. Furtado, A generalization of Sylvester's law
of inertia, \emph{Linear Algebra Appl.} 338 (2001) 287--290.

\bibitem {lan-rod}P. Lancaster, L. Rodman, Canonical forms for
symmetric/skew-symmetric real matrix pairs under strict equivalence and
congruence, \emph{Linear Algebra Appl.} 406 (2005) 1--76.

\bibitem {L-R}P. Lancaster and L. Rodman, Canonical forms for Hermitian matrix
pairs under strict equivalence and congruence, \emph{SIAM Review} 47 (2005) 407-443.

\bibitem {leewei}J. M. Lee and D. A. Weinberg, A note on canonical forms for
matrix congruence, \textit{Linear Algebra Appl.} 249 (1996) 207--215.

\bibitem {rie}C. Riehm, The equivalence of bilinear forms, \emph{J. Algebra}
31 (1974) 45--66.

\bibitem {RSF}C. Riehm and M. Shrader--Frechette, The equivalence of
sesquilinear forms, \emph{J. Algebra} 42 (1976) 495--530.

\bibitem {robinson}D. W. Robinson, An alternative approach to unitoidness, \emph{Linear Algebra
Appl.} 413 (2006) 72-80.

\bibitem {roi}A. V. Roiter, Bocses with involution, in:
\textit{Representations and Quadratic Forms} (Ju. A. Mitropol$^{\prime}$skii,
Ed.), Inst. Mat. Akad. Nauk Ukrain. SSR, Kiev, 1979, 124--128 (in Russian).

\bibitem {ser1}V. V. Sergeichuk, Classification problems for system of forms
and linear mappings, \emph{Math. USSR, Izvestiya}\/ 31 (3) (1988) 481--501.

\bibitem {thom}R. C. Thompson, Pencils of complex and real symmetric and skew
matrices, \textit{Linear Algebra Appl.} 147 (1991) 323--371.

\bibitem {Wall}G. E. Wall, On the conjugacy classes in the unitary, symplectic
and orthogonal groups, \emph{J. Aust. Math. Soc.} 3 (1963) 1-62.

\bibitem {wat}W. C. Waterhouse, The number of congruence classes in
$M_{n}(F_{q})$, \textit{Finite Fields Appl.} 1 (1995) 57--63.
\end{thebibliography}
\end{document}